\documentclass{amsart}
\usepackage{latexsym}
\usepackage{palatino}
\usepackage{amsmath, amssymb,amsmath}
\usepackage{hyperref}

\usepackage[dvipsnames,usenames]{color}
\usepackage[all]{xy}
\newtheorem{dfs}{Definition}[subsection]
\newtheorem{lms}[dfs]{Lemma}
\newtheorem{thms}[dfs]{Theorem}
\newtheorem{props}[dfs]{Proposition}
\newtheorem{qus}[dfs]{Question}
\newtheorem{cors}[dfs]{Corollary}
\newtheorem{rems}[dfs]{Remark}

\newtheorem*{thm*}{Theorem}

\newcommand{\culeq}{\precsim}
\newcommand{\cugeq}{\succsim}
\newcommand{\cueq}{\sim}

\newcommand{\chii}{\raisebox{2pt}{$\chi$}}

\newcommand{\N}{\mathbb{N}}

\renewcommand{\epsilon}{\varepsilon}
\renewcommand{\leq}{\leqslant}
\renewcommand{\geq}{\geqslant}


\title[An algebraic approach to the radius of comparison]
{An algebraic approach to the radius of comparison}

\author[Blackadar]{Bruce Blackadar}
\address{Department of Mathematics and Statistics,  
Ansari Business Building, 601 | Mail Stop 084, 
Reno, NV, USA 89557-0084}
\email{bruceb@unr.edu} 

\author[Robert]{Leonel Robert}
\address{Department of Mathematics and Statistics, York University, 4700 Keele St., Toronto, ON, Canada L4J 3A4}
\email{leonel.robert@gmail.com}

\author[Tikuisis]{Aaron P.\ Tikuisis}
\address{Department of Mathematics, University of Toronto, Room 6290, 40 St. George Street, Toronto, ON, Canada M5S 2E4}
\email{aptikuis@math.toronto.edu}

\author[Toms]{Andrew S.\ Toms}
\address{Department of Mathematics, Purdue University, 150 N. University St., West Lafayette, IN, USA, 47907-2067}
\email{atoms@purdue.edu}

\author[Winter]{Wilhelm Winter}
\address{School of Mathematical Sciences, University of Nottingham, Nottingham NG7 2RD, UK}
\email{wilhelm.winter@nottingham.ac.uk}

\begin{document}

\begin{abstract}
The radius of comparison is an invariant for unital C$^*$-algebras which extends the theory of covering dimension to noncommutative spaces.
We extend its definition to general C$^*$-algebras, and give an algebraic (as opposed to functional-theoretic) reformulation.  This yields new permanence properties for the radius of comparison which strengthen its analogy with covering dimension for commutative spaces.  We then give several applications of these results.  New examples of C$^*$-algebras with finite radius of comparison are given, and the question of when the Cuntz classes of finitely generated Hilbert modules form a hereditary subset of the Cuntz semigroup is addressed.  Most interestingly, perhaps, we treat the question of when a full hereditary subalgebra $B$ of a stable C$^*$-algebra $A$ is itself stable, giving a characterization in terms of the radius of comparison.  We also use the radius of comparison to quantify the least $n$ for which a C$^*$-algebra $D$ without bounded 2-quasitraces or unital quotients has the property that $\mathrm{M}_n(D)$ is stable. \end{abstract}

\maketitle

\section{Introduction}

There are many invariants for C$^*$-algebras which are meant to capture a noncommutative version of covering dimension.  They all recover, or are at least proportional to, the covering dimension of a locally compact Hausdorff space $X$ when applied to the commutative C$^*$-algebra $\mathrm{C}_0(X)$.  Their behaviour for more general C$^*$-algebras, however, can be quite different.  The aim of this note is to expand the theory of one of these invariants:  the radius of comparison.  This invariant may be thought of roughly as being the minimum difference in rank required between positive operators $a$ and $b$ in a C$^*$-algebra $A$ before $b$ can be conjugated to an element arbitrarily close to $a$.  It was introduced in \cite{To-comm} with a view to providing a theory of "moderate dimension growth" for AH algebras, the existence of which was suggested by the first named author in \cite{Bl-matricial}.  It has since proved useful in distinguishing simple C$^*$-algebras having the same $\mathrm{K}$-theory and traces (see \cite{To-comp} and \cite{Giol-Kerr});  \cite{Giol-Kerr} is particularly interesting, as it suggests a connection between the radius of comparison of a crossed product C$^*$-algebra and the mean dimension of the underlying topological dynamical system.

Some of the basic properties of the radius of comparison were established in \cite{To-comm}, but two important questions were left open:  how does the invariant behave with respect to direct limits and quotients?  Here we use an algebraic reformulation of the radius of comparison to answer these questions:  the radius of comparison is lowered by passage to a quotient, and is lower semicontinuous with respect to inductive limits.  The latter result is applied to exhibit a large class of C$^*$-algebras with finite radius of comparison, namely, the ASH algebras of linear or ``flat" dimension growth (\cite{To-flat}).

The Cuntz semigroup encodes a great deal of the structure of a C$^*$-algebra, including its ideal lattice, its (pre)ordered $\mathrm{K}_0$-group (in the stably finite, unital case), and, under some local approximation assumptions, the presence of $\mathcal{Z}$-stability (\cite{w-perfect}).  The radius of comparison is an invariant of the Cuntz semigroup, and it is therefore natural to ask what it can tell us about the structure of a C$^*$-algebra.  We first consider the question of when a $\sigma$-unital full hereditary subalgebra $B$ of a stable C$^*$-algebra $A$ is itself stable.  Two natural necessary conditions, collectively termed property (S) in \cite{OPR}, are that $B$ have no bounded 2-quasitrace and no unital quotients.  When is property (S) sufficient?  R\o rdam proved in \cite{Ror-stab} that the answer is ``not always'', even for simple algebras.  Here we prove that property (S) suffices for stability if the radius of comparison of $A$ relative to the infinite element of its Cuntz semigroup is zero;  if projections are finite in each quotient of $A$, then the sufficiency of property (S) is equivalent to the said vanishing of the radius of comparison.  As a consequence, we prove that the sufficiency of property (S) for the stability of a $\sigma$-unital hereditary subalgebra
$B$ of $A \otimes \mathcal{K}$ is equivalent to the presence of the $\omega$-comparison property of Ortega-Perera-R\o rdam in the Cuntz semigroup of $A$.  As a further consequence, we obtain that a C$^*$-algebra with finite radius of comparison enjoys the Corona Factorization Property of Ng and Kucerovsky.

In proving the insufficiency of property (S) for stability, R\o rdam constructs a simple C$^*$-algebra $A$ which is not stable, but for which $\mathrm{M}_2(A)$ is stable.  This raises a question:  given a C$^*$-algebra $B$, what is the least $n$ such that $\mathrm{M}_n(B)$ is stable?  Of course, one must restrict to algebras with property (S) in order to have any chance of such an $n$ existing. We give a complete answer to the question: when $A$ has property (S), the least $n$ which suffices is exactly one more than the normalised radius of comparison of the unitization of $A$.

Elements of the Cuntz semigroup are now commonly viewed as equivalence classes of countably generated Hilbert modules over a C$^*$-algebra $A$, but in this picture the original definition of Cuntz corresponds to the subsemigroup of classes of finitely generated modules (the latter is typically denoted by $W(A)$).    When is this subsemigroup hereditary?  We prove here that $W(A)$ is a hereditary subset of the Cuntz semigroup whenever the radius of comparison is finite.

The paper is organized as follows:  Section \ref{prelim} introduces the Cuntz semigroup, the category $\mathbf{Cu}$ in which it sits, and some basic facts about its functionals;  Section \ref{rcsection} reformulates the radius of comparison in algebraic terms, and establishes its behaviour with respect to quotients and inductive limits;  Section \ref{applications} establishes the various applications of finite radius of comparison detailed above.

\vspace{3mm}
\noindent
{\it Acknowledgement.}
This paper was born of a collaborative effort at the AIM Workshop ``The Cuntz semigroup", held in November of 2009.  The authors are grateful to AIM and its staff for their support.

A.P.T.\ was supported by an NSERC CGS-D scholarship, A.S.T.\ was supported by NSF grant DMS-0969246, and W.W.\ was supported by EPSRC First Grant  \linebreak EP/G014019/1.

\section{Preliminaries}\label{prelim}

\subsection{Basic notation}

We use $\mathcal{K}$ to denote the algebra of compact operators on a separable infinite-dimensional Hilbert space $\mathcal{H}$.  For a C$^*$-algebra $A$, we use $A_+$ to denote the subset of positive operators.  For $\epsilon>0$ we let $f_\epsilon\colon \mathbb{R} \to \mathbb{R}$ denote the function which is identically zero on $(-\infty,\epsilon]$ and satisfies $f_\epsilon(t)=t-\epsilon$ elsewhere;  for a self-adjoint operator $a$ we  set $(a-\epsilon)_+ := f_\epsilon(a)$.

\subsection{The Cuntz semigroup}\label{WAdef}


Let $A$ be a C$^*$-algebra. Let us consider on
$(A\otimes\mathcal K)_+$ the relation $a\precsim b$ if $v_nbv_n^*\to a$ for some sequence $(v_n)$ in $A\otimes\mathcal K$.
Let us write $a\sim b$ if $a\precsim b$ and $b\precsim a$. In this case we say that $a$ is Cuntz equivalent to $b$.
Let $Cu(A)$ denote the set $(A\otimes \mathcal K)_+/\sim$ of Cuntz equivalence classes. We use $\langle a \rangle$ to denote the class of $a$ in $Cu(A)$.  It is clear that
$\langle a \rangle\leq \langle b \rangle \Leftrightarrow a\precsim b$ defines an order on $Cu(A)$. We also endow $Cu(A)$
with an addition operation by setting $\langle a \rangle+\langle b \rangle:=\langle a'+b' \rangle$, where
$a'$ and $b'$ are orthogonal and Cuntz equivalent to $a$ and $b$ respectively (the choice of $a'$ and $b'$
does not affect the Cuntz class of their sum). The semigroup $W(A)$ is then the subsemigroup of $Cu(A)$ of Cuntz classes
with a representative in $\bigcup_n \mathrm{M}_n(A)_+$.

Alternatively, $Cu(A)$ can be defined to consist of equivalence classes of countably generated Hilbert modules over $A$.  The equivalence relation boils down to isomorphism in the case that $A$ has stable rank one, but is rather more complicated in general.  We do not require the precise definition of this relation in the sequel, and so omit it;  the interested reader may consult \cite{CEI} or \cite{APT} for details.  We note, however, that the identification of these two approaches to $Cu(A)$ is achieved by associating the element $\langle a \rangle$ to the class of the Hilbert module $\overline{a\ell_2(A)}$.  If $X$ is a countably generated Hilbert module over $A$, then we use $[X]$ to denote its Cuntz equivalence class;  with this notation the subsemigroup $W(A)$ is identified with those classes $[X]$ for which $X$ is finitely generated.


\subsection{The category $\mathbf{Cu}$}\label{cu}

The semigroup $Cu(A)$ is an object in a category of ordered Abelian monoids denoted by ${\mathbf{Cu}}$ whose properties we will use heavily.  Before stating them, we require the notion of order-theoretic compact containment.  Let $T$ be a preordered set with $x,y \in T$.  We say that $x$ is compactly contained in $y$---denoted by $x \ll y$---if for any increasing sequence $(y_n)$ in $T$ with supremum $y$, we have $x \leq y_{n_0}$ for some $n_0 \in \mathbb{N}$.  An object $S$ of $\mathbf{Cu}$ enjoys the following properties:
\begin{enumerate}
\item[{\bf P1}] $S$ contains a zero element; \vspace{1mm}
\item[{\bf P2}] the order on $S$ is compatible with addition:  $x_1 + x_2
\leq y_1 + y_2$ whenever $x_i \leq y_i , i \in \{1, 2\}$; \vspace{1mm}
\item[{\bf P3}] every countable upward directed set in $S$ has a supremum; \vspace{1mm}
\item[{\bf P4}] the set $x_\ll = \{y \in S \ |  \ y \ll x\}$ is upward directed with respect to both
$\leq$ and $\ll$, and contains a sequence $(x_n)$ such that
$x_n \ll x_{n+1}$ for every $n \in \mathbb{N}$ and $\sup_n x_n = x$; \vspace{1mm}
\item[{\bf P5}] the operation of passing to the supremum of a countable upward directed set and the relation
$\ll$ are compatible with addition:  if $S_1$ and $S_2$ are countable upward directed sets in $S$, then
$S_1 + S_2$ is upward directed and $\sup (S_1 + S_2) = \sup S_1 + \sup S_2$, and if $x_i \ll y_i$ for $i \in \{1, 2\}$, then
$x_1 + x_2 \ll y_1 + y_2$ .
\end{enumerate}

\noindent
Here we assume further that $0 \leq x$ for any $x \in S$.  This is always the case for $Cu(A)$.
For $S$ and $T$ objects of $\textbf{Cu}$, the map $\phi\colon S\to T$ is a morphism in the category $\mathbf{Cu}$ if
\begin{enumerate}
\item[{\bf M1}] $\phi$ is order preserving; \vspace{1mm}
\item[{\bf M2}] $\phi$ is additive and maps $0$ to $0$; \vspace{1mm}
\item[{\bf M3}] $\phi$ preserves the suprema of increasing sequences; \vspace{1mm}
\item[{\bf M4}] $\phi$ preserves the relation $\ll$. \vspace{1mm}
\end{enumerate}

The category $\mathbf{Cu}$ admits inductive limits, and $Cu(\cdot)$ may be viewed as a functor from C$^*$-algebras into $\mathbf{Cu}$.  A central result of \cite{CEI} is that if $(A_i,\phi_i)$ is an inductive sequence of C$^*$-algebras, then
\[
Cu \left( \lim_{i \to \infty} (A_i,\phi_i) \right) \cong \lim_{i \to \infty} (Cu(A_i), Cu(\phi_i)).
\]
Let $S = \lim_{i \to \infty}(S_i,\phi_i)$ be an inductive limit in the category $\mathbf{Cu}$, with $\phi_{i,j}\colon S_i \to S_j$ and
$\phi_{i,\infty}\colon S_i \to S$ the canonical maps.  We have the following
two properties (established in \cite{CEI}):
\begin{enumerate}
\item[{\bf L1}] each $x \in S$ is the supremum of an increasing sequence $(x_i)$ belonging to $\bigcup_{i=1}^\infty \phi_{i,\infty}(S_i)$
and such that $x_i\ll x_{i+1}$ for all $i$; \vspace{1mm}
\item[{\bf L2}] If $x,y\in S_i$ and $\phi_{i,\infty}(x)\leq \phi_{i,\infty}(y)$,
then for all $x'\ll x$ there is $n$ such that \mbox{$\phi_{i,n}(x')\leq \phi_{i,n}(y)$}.
\end{enumerate}

For $e\in S$ we denote by  $\infty \cdot e$ the supremum $\sup_{n\geq 1} ne$. We say that $e$ is
full if $\infty \cdot e$ is the largest element of $S$. We say that $e$ is compact if $e\ll e$.  If a sequence $(x_i)$ in $S$ satisfies $x_i \ll x_{i+1}$ for every $i$, then we say that the sequence is rapidly increasing.

\subsection{Functionals and $\mathbf{Cu}$}\label{functionals}

Let $S$ be a semigroup in the category $\mathbf{Cu}$. A functional on $S$ is a map $\lambda\colon S\to [0,\infty]$
that is additive, order preserving, preserves suprema of increasing sequences and satisfies $\lambda(0)=0$.
We use $F(S)$ to denote the functionals on $S$.
We will make use of the following lemma, established in \cite{ERS}.
\begin{lms}\label{makelsc}
If $S$ is in the category $\mathbf{Cu}$ and $\lambda\colon S\to [0,\infty]$ is additive, order preserving,
and maps $0$ to $0$, then $\tilde\lambda(x):=\sup_{x'\ll x} \lambda(x')$ defines a functional on $S$.
\end{lms}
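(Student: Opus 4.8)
The plan is to verify directly that $\tilde\lambda$ satisfies the four defining properties of a functional: it sends $0$ to $0$, is order preserving, is additive, and preserves suprema of increasing sequences. The starting point is a convenient reformulation of $\tilde\lambda$. By P4 the set $x_\ll$ is upward directed and contains a rapidly increasing sequence $(x_n)$ with $\sup_n x_n = x$; since any $x'\ll x$ satisfies $x'\leq x_{n_0}$ for some $n_0$ (apply the definition of $\ll$ to the sequence $(x_n)$), this sequence is cofinal in $x_\ll$. As $\lambda$ is order preserving, it follows that $\tilde\lambda(x)=\sup_n\lambda(x_n)$ for any such sequence, and that the value does not depend on the chosen sequence. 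I would use this description throughout.

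The two easy properties come first. For $\tilde\lambda(0)=0$, note that $x'\ll 0$ forces $x'\leq 0$ (test against the constant sequence $0$), hence $x'=0$, so $0_\ll=\{0\}$ and $\tilde\lambda(0)=\lambda(0)=0$. For additivity, fix representing sequences $(x_n)$ and $(y_n)$ for $x$ and $y$ as above. The ``$\geq$'' inequality is immediate from P5: whenever $x'\ll x$ and $y'\ll y$ one has $x'+y'\ll x+y$, so $\lambda(x')+\lambda(y')\leq\tilde\lambda(x+y)$, and taking suprema gives $\tilde\lambda(x)+\tilde\lambda(y)\leq\tilde\lambda(x+y)$. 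For ``$\leq$'', P5 also shows that $(x_n+y_n)$ is increasing with supremum $x+y$; hence any $z\ll x+y$ satisfies $z\leq x_{n_0}+y_{n_0}$ for some $n_0$, and then $\lambda(z)\leq\lambda(x_{n_0})+\lambda(y_{n_0})\leq\tilde\lambda(x)+\tilde\lambda(y)$. Taking the supremum over $z$ completes additivity.

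It remains to treat order preservation and preservation of suprema, and this is where the real work lies. The key technical ingredient I expect to lean on is the standard compatibility of $\ll$ with the order in $\mathbf{Cu}$, namely that $a\ll b\leq c$ implies $a\ll c$ (together with the interpolation property that $a\ll c$ yields $a\ll b\ll c$ for some $b$, which follows from P4 and the trivial implication $a\leq b\ll c\Rightarrow a\ll c$). Granting this, order preservation is immediate: if $x\leq y$ then every $x'\ll x$ also satisfies $x'\ll y$, so $\tilde\lambda(x)\leq\tilde\lambda(y)$. For preservation of suprema of an increasing sequence $(x^{(k)})$ with supremum $x$, the inequality $\sup_k\tilde\lambda(x^{(k)})\leq\tilde\lambda(x)$ follows from order preservation; for the reverse inequality, take $z\ll x$, interpolate $z\ll z'\ll x$, use $z'\leq x^{(k_0)}$ (the definition of $\ll$ against the sequence $(x^{(k)})$), and then $z\ll z'\leq x^{(k_0)}$ gives $z\ll x^{(k_0)}$, whence $\lambda(z)\leq\tilde\lambda(x^{(k_0)})\leq\sup_k\tilde\lambda(x^{(k)})$; taking the supremum over $z$ finishes the proof.

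The main obstacle is precisely the compatibility statement $a\ll b\leq c\Rightarrow a\ll c$: unlike the reverse-flavoured implication $a\leq b\ll c\Rightarrow a\ll c$, it is not immediate from the sequential definition of $\ll$, since a test sequence converging to $c$ need not converge to $b$. I would establish it (or invoke it from the foundational treatment of $\mathbf{Cu}$ in \cite{CEI} and \cite{ERS}) using P3 together with the rapidly increasing sequences supplied by P4; once it is in hand, every remaining step above is routine, and the only delicate point throughout is keeping careful track of whether one is using $\ll$ or merely $\leq$ at each stage.
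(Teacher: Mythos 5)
The paper offers no proof of this lemma at all: it is quoted verbatim from \cite{ERS}, so there is nothing internal to compare against. Measured against the standard argument (which is the one in \cite{ERS}), your verification is the right one and is essentially complete: the cofinality of the rapidly increasing sequence from \textbf{P4} in $x_\ll$, the computation $\tilde\lambda(0)=0$, both halves of additivity via \textbf{P5} (including $\sup_n(x_n+y_n)=x+y$), and the interpolation argument for suprema of increasing sequences are all correct, and you have correctly isolated the single nontrivial ingredient, namely the compatibility $a\ll b\leq c\Rightarrow a\ll c$.

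Where your proposal has a genuine soft spot is in the claim that this compatibility can be \emph{established} from \textbf{P3} together with the rapidly increasing sequences of \textbf{P4}. Under the paper's verbatim definition of $\ll$ (testing only increasing sequences whose supremum is \emph{exactly} $y$), it cannot: consider the poset $S=\{0,v,w_1,w_2,\dots,w\}$ with $0<v<w$, $0<w_1<w_2<\cdots<w$, $v$ incomparable to every $w_k$, and $w=\sup_k w_k$. This poset satisfies \textbf{P3} and \textbf{P4} (any increasing sequence with supremum exactly $v$ is eventually constant, so $v\ll v$, and $(w_k)$ is a rapidly increasing sequence in $w_\ll$ with supremum $w$), yet $v\ll v\leq w$ while $v\not\ll w$, as the sequence $(w_k)$ witnesses. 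So any attempted derivation from \textbf{P3} and \textbf{P4} alone must fail; every route you might try (diagonalizing over approximants $z_{n,k}\ll z_n$, say) runs into exactly the same obstruction, because $a\ll b$ can only be tested against sequences converging precisely to $b$. (This poset admits no $\mathbf{Cu}$-compatible addition---one is forced to $v+w_1=w$, and then \textbf{P5} applied to $v\ll v$ and $w_1\ll w_2$ would force $w\ll w$---so it does not contradict the lemma itself, only your proposed route to the key implication.) The correct resolution is your fallback, made definitional: in the foundational treatments of $\mathbf{Cu}$, $x\ll y$ means that whenever $(y_n)$ is increasing with $y\leq\sup_n y_n$, then $x\leq y_{n_0}$ for some $n_0$. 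With that reading, $a\ll b\leq c\Rightarrow a\ll c$ is immediate from the definition, every remaining step of your proof goes through verbatim, and the two readings of $\ll$ coincide on genuine objects of $\mathbf{Cu}$ (given $a\ll_{\mathrm{exact}}b$, interpolate $a\leq b_{k_0}\ll b_{k_0+1}\leq b$ along the rapidly increasing sequence of \textbf{P4}). So the fix is to adjust the definition and cite it, not to hunt for a proof from \textbf{P3} and \textbf{P4}.
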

For a C$^*$-algebra $A$, the functionals on $Cu(A)$ admit a description in terms of 2-quasitraces.  Recall that a lower semicontinuous extended 2-quasitrace on $A$ is a lower semicontinuous map $\tau\colon (A \otimes \mathcal{K})_+ \to [0,\infty]$ which vanishes at $0$, satisfies the trace identity, and is linear on pairs of positive elements that commute.  The set of all such quasitraces is denoted by $\mathrm{QT}_2(A)$.  Given $\tau \in \mathrm{QT}_2$ we define a map
$d_\tau\colon  Cu(A) \to [0,\infty]$ by the following formula:
\[
d_\tau(\langle a \rangle) := \lim_{n \to \infty} \tau(a^{1/n}).
\]
By Proposition 4.2 of \cite{ERS} that the association $\tau \mapsto d_\tau$ defines a bijection between $\mathrm{QT}_2(A)$ and $F(Cu(A))$, extending the work of Blackadar and Handelman in \cite{BH}.  In particular, $d_\tau(\langle a \rangle)$ is independent of the representative $a$ of $\langle a \rangle$.

\section{The radius of comparison}\label{rcsection}

\subsection{Original definition}The radius of comparison was originally introduced in \cite{To-flat} as an invariant for unital C$^*$-algebras.  Let $A$ be a unital C$^*$-algebra, and let $\mathrm{QT}_2^1(A)$ denote the set of normalised 2-quasitraces on $A$.  The radius of comparison of $A$, denoted by $\mathrm{rc}(A)$, is the infimum of the set of real numbers $r >0$ with the property that $a,b \in \bigsqcup_{n=1}^\infty \mathrm{M}_n(A)$ satisfy $a \precsim b$ whenever
\begin{equation}\label{originaldef}
d_\tau(\langle a \rangle) + r < d_\tau(\langle b \rangle), \ \forall \tau \in \mathrm{QT}_2^1(A).
\end{equation}
By the results of Subsection \ref{functionals}, this is equivalent to the demand that $x,y \in Cu(A)$ satisfy $x \leq y$ whenever
\[ \lambda(x) + r < \lambda(y), \]
for all $\lambda \in F(Cu(A))$ which are normalised in the sense that $\lambda(\langle 1_A\rangle)=1$.

The motivation for this definition comes from the stability properties of topological vector bundles.  It is well known that if two complex vector bundles over a compact Hausdorff space $X$---equivalently, two finitely generated projective modules over $\mathrm{C}(X)$---differ in rank by at least half the covering dimension of $X$, then the bundle (or module) of larger rank dominates the smaller one up to isomorphism \cite[Proposition 1]{dupreII}.  The smallest rank gap required in order to have this kind of comparison up to isomorphism does not, however, determine the dimension of $X$  (no gap is required for contractible $X$).  To recover the dimension of $X$ from comparability properties of modules, one must pass to the countably generated Hilbert modules over $X$.  Equivalence classes of these modules can be identified with $Cu(A)$, so that $\mathrm{rc}(A)$ really gives the minimum rank gap required between Hilbert modules over $A$ in order to guarantee that the larger module dominates the smaller one.  The notion of domination employed here is Cuntz comparison, as formulated in Subsection \ref{WAdef} and translated to the realm of Hilbert modules via \cite{CEI}.  In the case that $A \cong \mathrm{C}(X)$ for a CW-complex $X$, we have $\mathrm{rc}(A) \approx \mathrm{dim}(X)/2$.  This justifies viewing the radius of comparison as a sort of noncommutative dimension, an idea that will be reinforced in the sequel.

Our aim for the remainder of this section is to extend the definition of the radius of comparison to nonunital algebras, and to reformulate it in more algebraic terms.  This reformulation will be used to establish new properties of the radius of comparison which strengthen the analogy with covering dimension for commutative spaces.

\subsection{Compact normalization}
One may take a somewhat more sophisticated view of the number $r$ in the definition of $\mathrm{rc}(A)$, namely, that it is $r \cdot d_\tau(1_A) = r \cdot d_\tau(\langle 1_A \rangle)$.  As a first step toward a general radius of comparison, we consider replacing $\langle 1_A \rangle$ with an arbitrary full compact element $e$ of $Cu(A)$.  Let $S$ be an object of $\mathbf{Cu}$, and let $e \in S$.  Consider the following two properties of a number $r > 0$:
\begin{enumerate}
\item[{\bf R1}] if $x,y\in S$ are such that
\[
\lambda(x)+r\lambda(e)\leq \lambda(y)
\]
for all $\lambda\in F(S)$, then $x\leq y$; \vspace{1mm}
\item[{\bf R2}] if  $x,y\in S$ are such that
\[(n+1)x+me\leq ny,\]
for some $n,m$ which satisfy $\frac{m}{n}>r$, then $x\leq y$.
\end{enumerate}

\begin{props}\label{radius12}  Let $S$ be an object in $\mathbf{Cu}$ with $e \in S$ full and compact.
If $r$ satisfies {\bf R1} then it satisfies {\bf R2}. If $r$ satisfies {\bf R2} then $r+\epsilon$ satisfies {\bf R1}
for any $\epsilon>0$.
\end{props}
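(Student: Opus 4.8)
The plan is to handle the two implications separately. The implication $\mathbf{R1}\Rightarrow\mathbf{R2}$ is a direct computation with functionals, whereas $\mathbf{R2}\Rightarrow\mathbf{R1}$ requires converting a functional inequality back into an algebraic one, and this conversion is the crux.

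For $\mathbf{R1}\Rightarrow\mathbf{R2}$, suppose $(n+1)x + me \leq ny$ with $m/n > r$, and fix $\lambda \in F(S)$. Applying $\lambda$, which is additive and order preserving, yields $(n+1)\lambda(x) + m\lambda(e) \leq n\lambda(y)$. I would then verify $\lambda(x) + r\lambda(e) \leq \lambda(y)$ by cases. If $\lambda(y) = \infty$ this is automatic; if $\lambda(y) < \infty$, then $\lambda(x)$ and $\lambda(e)$ are finite as well (since $n+1 \geq 1$ and $m \geq 1$), so dividing by $n$ and using $\frac{n+1}{n} \geq 1$ together with $\frac{m}{n} > r$ gives $\lambda(x) + r\lambda(e) \leq \frac{n+1}{n}\lambda(x) + \frac{m}{n}\lambda(e) \leq \lambda(y)$. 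Since $\lambda$ was arbitrary, $\mathbf{R1}$ now yields $x \leq y$, as required.

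For $\mathbf{R2}\Rightarrow\mathbf{R1}(r+\epsilon)$, assume $\lambda(x) + (r+\epsilon)\lambda(e) \leq \lambda(y)$ for every $\lambda \in F(S)$, and aim to produce integers $n,m$ with $m/n > r$ and $(n+1)x + me \leq ny$, whence $\mathbf{R2}$ gives $x \leq y$. By $\mathbf{P4}$ we have $x = \sup\{x' : x' \ll x\}$, and since $y$ bounds each such $x'$ it suffices to prove $x' \leq y$ for every $x' \ll x$. Fix $x' \ll x$ and a rational $s = m_1/n_1$ with $r < s < r+\epsilon$, and set $U = n_1 x' + m_1 e$ and $V = n_1 y$. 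Using $x' \leq x$ and order preservation, the hypothesis passes to $x'$, and a short computation (with $m_1 = n_1 s$) gives, for every $\lambda$, the estimate $\lambda(U) \leq \lambda(V) - n_1(r+\epsilon - s)\lambda(e)$; normalizing at $\lambda(e) = 1$ this is a strict gap $\lambda(U) \leq \lambda(V) - \delta$ with $\delta = n_1(r+\epsilon - s) > 0$, uniform over the normalized functionals. Moreover $U \ll n_1 x + m_1 e$ by $\mathbf{P5}$, using $x' \ll x$ and compactness of $e$.

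The main obstacle is converting this uniform functional gap into the order relation $(k+1)U \leq kV$ for some $k \in \N$. Here I would use that, since $e$ is full and compact, the set of functionals normalized by $\lambda(e) = 1$ is compact and captures every nonzero functional, and then invoke a comparison lemma of R\o rdam type (not among the properties $\mathbf{P1}$--$\mathbf{P5}$ listed above): a uniform strict domination on this compact set, with $U$ compactly contained, forces $(k+1)U \leq kV$ for some $k$. Granting this, $(k+1)(n_1 x' + m_1 e) \leq k n_1 y$; writing $n = k n_1$ and $m = (k+1)m_1$, and absorbing the surplus copies of $x'$ via $(n+1)x' \leq (n+n_1)x' = (k+1)n_1 x'$ together with $\mathbf{P2}$, I obtain $(n+1)x' + me \leq ny$ with $m/n = \frac{k+1}{k}s > r$. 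Then $\mathbf{R2}$ gives $x' \leq y$, and taking the supremum over $x' \ll x$ finishes the argument. The delicate inputs are thus the compactness of the normalized functional space and the comparison lemma producing $(k+1)U \leq kV$; everything else is bookkeeping with coefficients.
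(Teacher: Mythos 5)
Your first implication and the overall skeleton of the second are sound, and in fact the skeleton coincides with the paper's: reduce to $x' \ll x$, choose a rational $m/n$ strictly between $r$ and $r+\epsilon$, extract a strict functional gap, convert it to an algebraic inequality of the form $(n'+1)x' + m'e \leq n'y$ with $m'/n' > r$ by a Goodearl--Handelman-type comparison result, and finish with \textbf{R2} and a supremum over $x' \ll x$ (your coefficient bookkeeping $n = kn_1$, $m=(k+1)m_1$ is correct). But the step you flag as the crux and outsource to ``a comparison lemma of R{\o}rdam type'' is a genuine gap, for a specific reason: every lemma of this type in the literature (\cite[Lemmas 2.3 and 2.8]{Black-Ror}, \cite[Proposition 3.2]{Ror-Zabs}) requires the strict inequality to hold for \emph{all} additive, order-preserving maps $\lambda\colon S \to [0,\infty]$ normalized at an order unit --- i.e.\ for arbitrary states, which need not preserve suprema of increasing sequences --- whereas you have verified the gap only for functionals in $F(S)$. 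A functional gap does not formally imply a state gap, so these lemmas do not apply to your data as assembled. Moreover, the mechanism you propose, compactness of $\{\lambda \in F(S) : \lambda(e)=1\}$, is not what drives any of these results; they are extension/Hahn--Banach arguments on states of order-unit semigroups, and topological compactness of the functional space plays no role.

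The missing bridge is precisely the regularization of Lemma \ref{makelsc}: given any additive, order-preserving $\lambda$ with $\lambda(e)\neq 0$, its regularization $\tilde\lambda(z)=\sup_{z'\ll z}\lambda(z')$ \emph{is} a functional, and since $x' \ll x$ and $e \ll e$ one gets
\[
\lambda(nx'+me)\ \leq\ \tilde\lambda(nx+me)\ <\ \tilde\lambda(ny)\ \leq\ \lambda(ny),
\]
which upgrades your functional gap to a gap over all states. You actually place the needed ingredient on the table when you observe $U \ll n_1x + m_1e$, but you never use it for this purpose. Two further hypotheses of the comparison lemmas also go unverified in your sketch: (a) the order-unit condition $x', e \leq ky$, which requires knowing that $y$ is full --- the paper extracts this from the hypothesis of \textbf{R1} via the functional $\lambda_y$ ($\lambda_y(z)=0$ if $z \leq \infty\cdot y$ and $\infty$ otherwise); and (b) the strict gap for states normalized at $y$ (not at $e$), including those with $\lambda(e)=0$, which is where fullness and compactness of $e$ enter again (if $\lambda(e)=0$ then $\tilde\lambda(e)=\lambda(e)=0$ by compactness, so $\tilde\lambda \equiv 0$ by fullness of $e$, whence $\lambda(x')\leq\tilde\lambda(x)=0$). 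With the regularization step inserted and these two points checked, your argument becomes essentially the paper's proof; without them, the black-box lemma you invoke cannot be cited as stated.
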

\begin{proof}
The implication  ``$r$ satisfies  {\bf R1}'' $\Rightarrow$ ``$r$ satisfies {\bf R2}'' is clear.

Let $r$ be a number that satisfies {\bf R2} and let us show that $r+\epsilon$ satisfies {\bf R1} for $\epsilon>0$.
Suppose that $x,y\in S$ are such that
\begin{align*}\label{strictlambda}
\lambda(x)+(r+\epsilon)\lambda(e)\leq\lambda(y)
\end{align*} for all $\lambda\in F(S)$.
The map $\lambda_y\colon S\to [0,\infty]$
defined by $\lambda_y(z)=0$ if $z\leq \infty \cdot y$ and $\lambda_y(z)=\infty$ otherwise, is
a functional. The above inequality implies that $\lambda_y\equiv 0$. That is, $y$ is full.

Choose $m,n\in \N$ such that  $r<\frac{m}{n}<r+\epsilon$.
Then $\lambda(nx+me)< \lambda(ny)$ for all $\lambda\in F(S)$ such that $\lambda(e)\neq 0$.
Let $x'\ll x$ and let $\lambda\colon S\to [0,\infty]$ be additive, order preserving
(though not necessarily a functional), and satisfy $\lambda(e)\neq 0$. Let $\tilde\lambda$ be the
functional obtained from $\lambda$ as in Lemma \ref{makelsc}.
Then
\[
\lambda(nx'+me)\leq \tilde\lambda(nx+me)<\tilde \lambda(ny)\leq \lambda(ny).
\]
That is, $\lambda(nx'+me)< \lambda(ny)$ for any $\lambda\colon S\to [0,\infty]$
that is additive, order preserving, and satisfies $\lambda(e)\neq 0$.
Notice that  $x',e\leq ky$ for some $k\in\mathbb N$ and that the
inequality $\lambda(nx'+me)<\lambda(ny)$ holds for all $\lambda$ such that $\lambda(y)=1$.
By \cite[Lemma 2.8]{Black-Ror} applied with $y$ as the order unit, we conclude that
\[Nnx'+Nme+z+y\leq Nny+z\]
for some $N\in \N$ and $z\in S$ such that $z\le ky$ for some $k\in \N$. By \cite[Lemma 2.3]{Black-Ror} (again
with $y$ as the order unit), we obtain
\[
N_1Nnx'+N_1Nme+y\leq N_1Nny
\]
for some $N_1>0$. Let $N_2>0$ be such that $x'\leq N_2y$. Then
\[
(N_2N_1Nn+1)x'+N_2N_1Nme\leq N_2N_1Nny.
\]
Notice now that $\frac{N_2N_1m}{N_2N_1Nn}=\frac{m}{n}>r$.
Thus, $x'\leq y$ by {\bf R2}.
Since $x$ is the supremum of $x'$ with $x'\ll x$ we conclude that
$x\leq y$. Thus, $r+\epsilon$ satisfies {\bf R1}.
\end{proof}

\begin{dfs}\label{compactrc}
Let $S$ be an object of $\mathbf{Cu}$, and let $e \in S$ be full and compact.  We define the radius of comparison of $(S,e)$ to be the infimum of the numbers satisfying {\bf R1} (equivalently, the infimum of the numbers satisfying {\bf R2}) above.
\end{dfs}

For a unital C$^*$-algebra  $A$ we use $r_A$ to denote the radius of comparison of the pair $(Cu(A),\langle 1_A \rangle)$.
We can see from {\bf R1} and the results of Subsection \ref{functionals} that $r_A$ is defined similarly to $\mathrm{rc}(A)$, the only difference being that in {\bf R1}, we allow functionals on $Cu(A)$ which take the value $\infty$ at the unit.
From this it is clear that $r_A \leq \mathrm{rc}(A)$.
In the case that the C$^*$-algebra $A$ is sufficiently finite, the two notions agree:

\begin{props}\label{oldradiusequalsnew}
Let $A$ be a unital C$^*$-algebra all of whose quotients are stably finite.
Then $\mathrm{rc}(A) = r_A$.
\end{props}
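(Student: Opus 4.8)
The plan is to prove the two inequalities separately. Since $r_A \le \mathrm{rc}(A)$ has already been observed, the content is the reverse inequality $\mathrm{rc}(A) \le r_A$. I would prove this by showing that every $r$ satisfying \textbf{R1} also witnesses the defining property of $\mathrm{rc}(A)$ in its reformulated shape, i.e. that it is admissible for $\mathrm{rc}(A)$. As $\mathrm{rc}(A)$ and $r_A$ are the infima of the respective admissible sets, the containment $\{r : \textbf{R1}\ \text{holds}\}\subseteq\{r : r\ \text{admissible for}\ \mathrm{rc}(A)\}$ immediately gives $\mathrm{rc}(A)\le r_A$, and hence equality.

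So I would fix $r$ satisfying \textbf{R1} and take $x=\langle a\rangle$, $y=\langle b\rangle\in Cu(A)$ with the strict comparison $\lambda(x)+r<\lambda(y)$ for every normalised functional $\lambda$ (equivalently, every $\tau\in\mathrm{QT}_2^1(A)$). The goal is $x\le y$, and the strategy is to verify the hypothesis of \textbf{R1}, namely $\lambda(x)+r\lambda(e)\le\lambda(y)$, for \emph{all} $\lambda\in F(Cu(A))$, and then invoke \textbf{R1}. When $0<\lambda(e)<\infty$ this is immediate after passing to the normalised functional $\lambda/\lambda(e)$. When $\lambda(e)=0$ I would use that $e=\langle 1_A\rangle$ is full (automatic, since $A$ is unital), so $\infty\cdot e$ is the largest element of $Cu(A)$; then $\lambda(x)\le\lambda(\infty\cdot e)=\sup_n n\lambda(e)=0$, and the hypothesis of \textbf{R1} holds trivially.

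The main obstacle is the case $\lambda(e)=\infty$, where \textbf{R1} forces me to produce $\lambda(y)=\infty$, and this is exactly where the stable finiteness hypothesis must enter. The key step is to show that strict comparison forces $b$ to be full. Indeed, if $b$ were not full it would lie in $I\otimes\mathcal K$ for some proper ideal $I\subsetneq A$; the quotient $A/I$ is unital and stably finite, so by the theory of Blackadar and Handelman \cite{BH} it admits a normalised $2$-quasitrace $\bar\tau$. Its pullback $\tau:=\bar\tau\circ\pi$ lies in $\mathrm{QT}_2^1(A)$ and satisfies $d_\tau(y)=d_{\bar\tau}(\langle\pi(b)\rangle)=0$, contradicting $d_\tau(y)>r>0$. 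Hence $b$ is full, so $1_A\otimes e_{11}$ is a projection lying in the ideal it generates; being such, it is Cuntz-subequivalent to a finite direct sum of copies of $b$ (a standard fact; see \cite{APT}), giving $e\le k_0 y$ for some $k_0$. Consequently $\lambda(e)\le k_0\lambda(y)$ for every $\lambda$, so $\lambda(e)=\infty$ forces $\lambda(y)=\infty$ and the hypothesis of \textbf{R1} holds. With every case verified, \textbf{R1} yields $x\le y$, so $r$ is admissible for $\mathrm{rc}(A)$.

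I expect the only delicate point of the full write-up to be the invocation of a normalised quasitrace on each stably finite quotient; everything else is bookkeeping with functionals together with the elementary passage from fullness of $b$ to the domination $e\le k_0 y$. It is worth noting that stable finiteness is used precisely, and only, to rule out the pathology of a functional with $\lambda(e)=\infty$ but $\lambda(y)<\infty$; without it one expects $r_A<\mathrm{rc}(A)$ to be possible, consistent with the hypothesis being indispensable.
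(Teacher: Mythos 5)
Your proof is correct and takes essentially the same route as the paper's: you reduce the inequality $\mathrm{rc}(A)\leq r_A$ to showing each \textbf{R1}-admissible $r$ is admissible for $\mathrm{rc}(A)$, and you handle the problematic case $\lambda(\langle 1_A\rangle)=\infty$ exactly as the paper does, by using stable finiteness of the quotient by the ideal generated by $b$ to produce a normalised $2$-quasitrace vanishing on $b$, forcing $b$ to be full. If anything, your write-up is slightly more careful at two points the paper glosses over: the trivial case $\lambda(\langle 1_A\rangle)=0$, and the explicit use of compactness of $\langle 1_A\rangle$ to pass from fullness of $b$ to $\langle 1_A\rangle\leq k_0\langle b\rangle$ and hence to $\lambda(\langle b\rangle)=\infty$.
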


\begin{proof}
We need only show that $\mathrm{rc}(A) \leq r_A$.
Let $r$ be a number satisfying {\bf R1} for $(S,e)=(Cu(A),\langle 1_A \rangle)$.
Suppose that $\langle a \rangle,\langle b \rangle \in Cu(A)$ satisfy
\[ \lambda(\langle a \rangle) + r < \lambda(\langle b \rangle) \]
for all $\lambda \in F(Cu(A))$ for which $\lambda(\langle 1_A \rangle)=1$.
Hence,
\begin{equation} \lambda(\langle a \rangle) + r\lambda(\langle 1_A \rangle) < \lambda(\langle b \rangle) \label{oldradiusequalsnew-condition} \end{equation}
for all $\lambda \in F(Cu(A))$ for which $\lambda(\langle 1_A \rangle)<\infty$.

If $\lambda \in F(Cu(A))$ satisfies $\lambda(\langle 1_A \rangle) = \infty$ then to show $\lambda(\langle b \rangle)=\infty$ we must show that $y$ is full in $A \otimes \mathcal{K}$.
Suppose, for a contradiction, that $y$ is not full, so that the ideal $I$ generated by $y$ is not all of $A \otimes \mathcal{K}$.
Since $(A \otimes \mathcal{K})/I$ is finite, we can define $\overline{\lambda} \in F(Cu((A\otimes \mathcal{K})/I))$ that is non-zero and satisfies $\overline{\lambda}(\langle 1_A + I\rangle)=1$.
Then $\overline{\lambda}$ induces a functional on $Cu(A)$ which sends $\langle 1_A \rangle$ to $1$ but $\langle b \rangle$ to $0$, contradicting \eqref{oldradiusequalsnew-condition}.
Hence, $b$ is full and so \eqref{oldradiusequalsnew-condition} holds for all $\lambda \in F(Cu(A))$.
By {\bf R1}, we have $a \culeq b$, as required.
\end{proof}

In contrast to the agreement between $\mathrm{rc}(A)$ and $r_A$ in the finite case, if $A$ is a purely infinite simple C*-algebra, $\mathrm{rc}(A)=\infty$ but $r_A=0$ (in this case $Cu(A) = \{0,\infty\}$ so that the only non-trivial functional is the one taking $\infty$ to $\infty$).

\begin{props}\label{compactproperties}
Let $A$ be a unital C$^*$-algebra.

\begin{itemize}
\item[(i)] For any closed two-sided ideal $I$ of $A$ we have $r_{A/I}\leq r_A$.
\item[(ii)] If $A$ is simple  then $r_A=0$ is equivalent to almost unperforation, i.e., $(k+1)x\leq ky$ for some $k$ implies $x\leq y$.
\item[(iii)] If $A=\varinjlim A_i$, where the homomorphisms of the inductive limit are unital, then \[r_A\leq \liminf r_{A_i}.\]
\end{itemize}
\end{props}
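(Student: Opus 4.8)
The plan is to work throughout with the purely algebraic condition \textbf{R2}, since Proposition \ref{radius12} shows that the infimum of the numbers satisfying \textbf{R2} is the radius of comparison, and \textbf{R2} is obviously preserved when $r$ is increased. Thus for (i) and (iii) it suffices to fix $r$ larger than the asserted bound and verify that $r$ satisfies \textbf{R2} for the relevant pair; for (ii) I would use the functional condition \textbf{R1} in one direction. Note that in every algebra occurring here the unit is a full projection, so $\langle 1_A\rangle$, $\langle 1_{A/I}\rangle$, $\langle 1_{A_i}\rangle$ are full compact elements and all the radii are defined.

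For (i), fix $r>r_A$, so $r$ satisfies \textbf{R2} in $(Cu(A),\langle 1_A\rangle)$, and let $\bar x,\bar y\in Cu(A/I)$ satisfy $(n+1)\bar x+m\langle 1_{A/I}\rangle\le n\bar y$ with $m/n>r$; I must deduce $\bar x\le\bar y$. Since $\pi\otimes\mathrm{id}$ lifts positive elements to positive elements, $Cu(\pi)$ is surjective, so I may choose $x,y\in Cu(A)$ lifting $\bar x,\bar y$. Representing $(n+1)x+m\langle 1_A\rangle$ and $ny$ by positive elements $c,d$ and invoking the identification of $Cu(A/I)$ with the quotient of $Cu(A)$ by $Cu(I)$ from \cite{CEI}, the hypothesis $\langle\pi(c)\rangle\le\langle\pi(d)\rangle$ lifts to $\langle c\rangle\le\langle d\rangle+s$ for some $s\in Cu(I)$; that is,
\[(n+1)x+m\langle 1_A\rangle\le ny+s,\qquad s\in Cu(I).\]
Putting $y':=y+s$ gives $(n+1)x+m\langle 1_A\rangle\le ny'$, and since $m/n>r$, \textbf{R2} in $Cu(A)$ yields $x\le y'=y+s$. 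Applying $Cu(\pi)$ and using $Cu(\pi)(s)=0$ gives $\bar x\le\bar y$. The single external input is the lifting of the inequality modulo $Cu(I)$, and this is the step I would treat most carefully, as it encodes the description of the Cuntz semigroup of a quotient.

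For (ii), the implication $\Leftarrow$ is immediate and needs no simplicity: if $Cu(A)$ is almost unperforated and $(n+1)x+me\le ny$ with $m/n>r$, then discarding the positive summand $me$ gives $(n+1)x\le ny$, whence $x\le y$; thus every $r>0$ satisfies \textbf{R2} and $r_A=0$. For $\Rightarrow$ I would use \textbf{R1}. Assume $r_A=0$ and $(k+1)x\le ky$; the cases $x=0$ or $y=0$ are trivial, so assume $y\neq 0$. As $A$ is simple, $y$ is full, so $\infty\cdot y$ is the largest element and dominates the compact element $e=\langle 1_A\rangle$; compactness of $e$ then gives $e\le j_0 y$ for some $j_0$. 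Applying any $\lambda\in F(Cu(A))$ to $(k+1)x\le ky$ and rearranging (the infinite cases being trivial) yields $\lambda(y)-\lambda(x)\ge\frac{1}{k+1}\lambda(y)\ge\frac{1}{(k+1)j_0}\lambda(e)$, so with $r:=\frac{1}{(k+1)j_0}>0$ one has $\lambda(x)+r\lambda(e)\le\lambda(y)$ for every functional. Since $r_A=0$, this $r$ satisfies \textbf{R1}, and we conclude $x\le y$.

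For (iii), recall that $Cu(A)=\varinjlim Cu(A_i)$ with maps $\phi_{i,\infty}$ satisfying $\phi_{i,\infty}(\langle 1_{A_i}\rangle)=\langle 1_A\rangle$ because the connecting maps are unital. Let $r>\liminf_i r_{A_i}$; then $r_{A_i}<r$, so $r$ satisfies \textbf{R2} in $Cu(A_i)$, for infinitely many $i$. I will show $r$ satisfies \textbf{R2} in $(Cu(A),\langle 1_A\rangle)$, giving $r_A\le r$ and hence the claim. Suppose $(n+1)x+m\langle 1_A\rangle\le ny$ with $m/n>r$; since $x=\sup_{x'\ll x}x'$ it suffices to prove $x'\le y$ for each $x'\ll x$. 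The plan is to transfer the inequality to a finite stage: using \textbf{L1} to write $x$ and $y$ as suprema of rapidly increasing sequences of stage elements, I pick a stage element $x''$ with $x'\le x''\ll x$; then by \textbf{P5} and the standard monotonicity of $\ll$ one has $(n+1)x''+m\langle 1_A\rangle\ll ny=\sup_l ny_l$, so $(n+1)x''+m\langle 1_A\rangle\le ny_{l_0}$ for a stage element $y_{l_0}$. Passing to a common stage $i$ with $x''=\phi_{i,\infty}(a)$, $y_{l_0}=\phi_{i,\infty}(b)$ and using $\langle 1_A\rangle=\phi_{i,\infty}(\langle 1_{A_i}\rangle)$, this reads $\phi_{i,\infty}\big((n+1)a+m\langle 1_{A_i}\rangle\big)\le\phi_{i,\infty}(nb)$. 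By \textbf{L2} a suitable $\ll$-perturbation already holds at some finite stage $N\ge i$, which, pushing forward along the connecting maps (under which the inequality persists), I may take with $r_{A_N}<r$; applying \textbf{R2} at stage $N$ gives $\tilde a\le\tilde b$, and pushing forward yields $x'\le\phi_{N,\infty}(\tilde a)\le\phi_{N,\infty}(\tilde b)\le y$. The main obstacle is exactly this transfer: one must simultaneously keep the image of the left stage element above $x'$, keep the image of the right one below $y$, and retain the precise \textbf{R2}-form of the inequality, which I would arrange by careful bookkeeping with \textbf{L1}, \textbf{L2} and the properties $w\le w'\ll z\Rightarrow w\ll z$ and $w\ll z\le z'\Rightarrow w\ll z'$. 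The compactness of $\langle 1_A\rangle$, letting the unit term be carried intact from a finite stage, is what makes the transfer possible.
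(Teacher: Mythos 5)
Your proposal is correct and takes essentially the same route as the paper's own proof in all three parts: in (i) you lift the inequality modulo $Cu(I)$ and absorb the error term $s\in Cu(I)$ into $y$ before applying \textbf{R2}; in (ii) you use simplicity together with compactness of $\langle 1_A\rangle$ to insert a multiple of the unit (the paper bounds $\langle 1_A\rangle$ by a multiple of $x$ and stays with the algebraic inequality, while you bound it by a multiple of $y$ and pass through \textbf{R1} --- an immaterial variation); and in (iii) you transfer the \textbf{R2}-inequality to a finite stage via \textbf{L1}/\textbf{L2}, with compactness of the unit class carrying the term $m\langle 1\rangle$ intact, exactly as the paper does. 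The one bookkeeping point you flag in (iii) is settled precisely as in the paper: apply \textbf{L2} to $(n+1)a'+m\langle 1_{A_i}\rangle$ for an arbitrary $a'\ll a$, conclude $\phi_{i,\infty}(a')\leq y$ at a late stage with small radius, and then take the supremum over $a'\ll a$ to obtain $\phi_{i,\infty}(a)\leq y$ (and hence $x'\leq y$), rather than trying to keep $x'$ below the image of the perturbed element directly.
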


\begin{proof}
{(i).} Suppose that $x,y\in Cu(A/I)$ satisfy {\bf R2} for $e=\langle 1\rangle $
and $r=r_A$.
Let $\tilde x$ and $\tilde y$
be lifts of $x$ and $y$ in $Cu(A)$.
Then the inequality $(n+1)x+m\langle 1\rangle\leq ny$ lifts to
\[
(n+1)\tilde x+m\langle 1\rangle \leq n\tilde y+z\leq n(\tilde y +z).
\]
for some $z\in Cu(I)\subseteq Cu(A)$. Thus, $\tilde x\leq \tilde y+z$ and passing to the quotient we get $x\leq y$. This shows that $r_{A/I}\leq r_A$.

\vspace{2mm}
\noindent
{(ii).} It is clear, by the characterization {\bf R2} of the radius of comparison,
that almost unperforation implies $r_A=0$.
Suppose that $r_A=0$. Let $x,y\in Cu(A)$ be such that $(n+1)x\leq ny$ for some $n\in \N$.
Since $A$ is simple, every element of $Cu(A)$ is full (this is equivalent to simplicity).
Hence $\langle 1_A \rangle \leq \infty\cdot  x$, and so $\langle 1_A \rangle \leq kx$
for some $k$. We have  $knx+ \langle1_A \rangle \leq k(n+1)x\leq kny$. Since $r_A=0$, we get that $x\leq y$,
as desired.

\vspace{2mm}
\noindent
{(iii).} It is enough to show that $r_A\leq \sup_i r_{A_i}$ for all $i$
(passing to subsequences of the inductive limit
we  get the $\liminf$). Suppose we have $x$ and $y$ in $Cu(A)$ such that
$(n+1)x+m\langle 1\rangle\leq ny$ for $\frac{m}{n}>\sup_i r_{A_i}$. Let us show that $x\leq y$.
By {\bf L1} (see Subsection \ref{cu}) it suffices to assume that $x$ and $y$ come from finite stages.
Suppose that $x,y\in Cu(A_i)$ are such $(n+1)\phi_{i,\infty}(x)+m\langle 1_A \rangle \leq n\phi_{i,\infty}(y)$. Let $x'\ll x$.
By {\bf L2}, $(n+1)\phi_{i,j}(x')+m\langle 1_A \rangle \leq n\phi_{i,j}(y)$ for some $j\geq i$. Since $\frac{m}{n}>r_{A_j}$,
$\phi_{i,j}(x')\leq \phi_{i,j}(y)$ and so $\phi_{i,\infty}(x')\leq \phi_{i,\infty}(y)$. Since
$x'\ll x$ is arbitrary, we conclude that $\phi_{i,\infty}(x)\leq \phi_{i,\infty}(y)$.
\end{proof}

\begin{rems} {\rm Proposition \ref{compactproperties} shows that the radius of comparison enjoys some properties analogous to those of the covering dimension of a space (or commutative C$^*$-algebra).
From the general theory of covering dimension one has that a direct system of commutative C$^*$-algebras $(\mathrm{C}(X_i), \phi_i)$ with limit $\mathrm{C}(X)$ satisfies
\[
\mathrm{dim}(X) \leq  \liminf_i \mathrm{dim}(X_i).
\]
Proposition \ref{compactproperties} (iii) uses the radius of comparison to extend this property to unital C$^*$-algebra direct limits.  Similarly, Proposition \ref{compactproperties} (i) can be seen as a C$^*$-algebra extension of the fact that the covering dimension of a closed subset of a compact Hausdorff space is less than or equal to the dimension of the original space.
 }
 \end{rems}

\subsection{General normalization}

We now extend the definition of the radius of comparison to pointed objects $(S,e)$ in $\mathbf{Cu}$ for which $e$ is full but not necessarily compact.  In this case the infimum of the numbers $r>0$ satisfying {\bf R1} above still gives a reasonable definition, but in order to have an equivalent algebraic definition like {\bf R2} we have to make an adjustment.  Consider, then, the following property for a number $r >0$:
\begin{enumerate}
\item[{\bf R2'}]  if  $x,y\in S$ are such that   for all $x'\ll x$ and $e'\ll e$ there
are $n,m$,  with $\frac{m}{n}>r$, such that
\[(n+1)x'+me'\leq ny,\]
then $x\leq y$.
\end{enumerate}

\begin{props}\label{radiusgeneral}  Let $S$ be an object in $\mathbf{Cu}$ with $e \in S$ full.
If $r$ satisfies {\bf R1} then it satisfies {\bf R2'}. If $r$ satisfies {\bf R2'} then $r+\epsilon$ satisfies {\bf R1} for any $\epsilon>0$.
\end{props}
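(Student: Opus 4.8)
The plan is to follow the template of Proposition~\ref{radius12}, replacing the compact element $e$ by the compactly contained approximants $e'\ll e$ that already appear in {\bf R2'}; this substitution is precisely what compensates for the loss of compactness of $e$.

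For the implication ``{\bf R1} $\Rightarrow$ {\bf R2'}'', suppose $x,y$ satisfy the hypothesis of {\bf R2'}. Fixing a functional $\lambda\in F(S)$ together with a pair $x'\ll x$, $e'\ll e$, I would choose $n,m$ with $m/n>r$ and $(n+1)x'+me'\le ny$ as guaranteed, apply $\lambda$, and divide by $n$ to get
\[
\lambda(x')+r\lambda(e')\le \tfrac{n+1}{n}\lambda(x')+\tfrac{m}{n}\lambda(e')=\tfrac1n\bigl((n+1)\lambda(x')+m\lambda(e')\bigr)\le \lambda(y),
\]
where the first inequality uses $\tfrac{n+1}{n}\ge 1$ and $\tfrac{m}{n}>r$ and is valid throughout $[0,\infty]$. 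Taking the supremum first over $e'\ll e$ and then over $x'\ll x$---legitimate because functionals preserve suprema of rapidly increasing sequences ({\bf P4})---yields $\lambda(x)+r\lambda(e)\le\lambda(y)$ for every $\lambda$, whence $x\le y$ by {\bf R1}.

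The substance is in the converse. Assume $r$ satisfies {\bf R2'} and $\lambda(x)+(r+\epsilon)\lambda(e)\le\lambda(y)$ for all $\lambda\in F(S)$. As in Proposition~\ref{radius12}, the functional $\lambda_y$ (zero below $\infty\cdot y$, infinite elsewhere) forces $y$ to be full. I would then fix arbitrary $x'\ll x$ and $e'\ll e$ and manufacture the pair demanded by {\bf R2'}. Choose integers with $r<m/n<r+\epsilon$. Since $y$ is full and $x'\ll x\le\infty\cdot y$, $e'\ll e\le\infty\cdot y$, compact containment gives $x',e'\le ky$ for some $k$; this is exactly where $e'\ll e$ does the work that $e\ll e$ did in the compact case. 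The key step is to establish the strict inequality
\[
n\lambda(x')+m\lambda(e')<n\lambda(y)
\]
for every \emph{state} $\lambda$ (additive, order preserving, with $\lambda(y)=1$), not merely for functionals. Passing to the associated functional $\tilde\lambda$ of Lemma~\ref{makelsc} and using $\lambda(x')\le\tilde\lambda(x)$, $\lambda(e')\le\tilde\lambda(e)$ together with $\tilde\lambda(x)+(r+\epsilon)\tilde\lambda(e)\le\tilde\lambda(y)\le1$, I split on whether $\tilde\lambda(e)>0$: if so, the bound forces $\tilde\lambda(e)<\infty$, and the strict gap $m<n(r+\epsilon)$ delivers the strict inequality; if $\tilde\lambda(e)=0$, then fullness of $y$ gives $\tilde\lambda(y)\le\tilde\lambda(\infty\cdot e)=0$, hence $\tilde\lambda(x)=0$ and $\lambda(x')=\lambda(e')=0$, so the left side simply vanishes.

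With the strict inequality in hand for all states normalized at $y$, I would invoke \cite[Lemma~2.8]{Black-Ror} (with $y$ as order unit) to obtain $N$ and $z\le ky$ with $Nnx'+Nme'+z+y\le Nny+z$, cancel $z$ via \cite[Lemma~2.3]{Black-Ror} to reach $N_1Nnx'+N_1Nme'+y\le N_1Nny$, and finally absorb the extra $y$ using $x'\le N_2y$ to obtain
\[
(N_2N_1Nn+1)x'+N_2N_1Nm\,e'\le N_2N_1Nn\,y.
\]
Since the coefficient of $e'$ divided by that of $y$ is $m/n>r$, this is precisely a witness for {\bf R2'} at $(x',e')$. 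As $x'\ll x$ and $e'\ll e$ were arbitrary, {\bf R2'} applies and gives $x\le y$, so $r+\epsilon$ satisfies {\bf R1}. I expect the main obstacle to be the strict inequality above: the case $\tilde\lambda(e)=0$ is genuinely new relative to the compact setting and must be handled via fullness of $y$, and one must take care that the Black--R\o rdam lemmas are applied with the approximants $e'$ that are dominated by multiples of $y$, rather than with $e$ itself.
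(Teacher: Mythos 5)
Your proposal is correct and follows essentially the same route as the paper's proof: fullness of $y$ via the functional $\lambda_y$, strictness of $\gamma(nx'+me')<\gamma(ny)$ for states normalized at $y$ via Lemma \ref{makelsc}, then \cite[Lemmas 2.8 and 2.3]{Black-Ror} with $y$ as order unit and absorption of the extra $y$ using $x'\leq N_2y$; your explicit case split on whether $\tilde\lambda(e)$ vanishes merely spells out a degenerate case the paper treats only implicitly (it restricts attention to $\gamma$ with $\gamma(e)\neq 0$), and your final coefficient $N_2N_1Nm$ on $e'$ in fact corrects a typo in the paper's display. The only slip is cosmetic: in the case $\tilde\lambda(e)=0$, the inequality $\tilde\lambda(y)\leq\tilde\lambda(\infty\cdot e)=0$ follows from fullness of $e$ (which makes $\infty\cdot e$ the largest element of $S$), not from fullness of $y$.
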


\begin{proof}
Let  $r$ be a number satisfying  {\bf R1}. Suppose that $x$ and $y$ are as in  {\bf R2'}.
Then for $\lambda\in F(S)$, $x' \ll x$ and $e' \ll e$ we have
$\lambda(x')+r\lambda(e')\leq \lambda(y)$. Taking supremum over all $x'\ll x$ and $e'\ll e$ we conclude that
$\lambda(x)+r\lambda(e)\leq\lambda(y)$. By {\bf R1} this implies that $x\leq y$, and so $r$ satisfies
{\bf R2'}.

Let $r$ be a number that satisfies {\bf R2'}, and let $\epsilon>0$ be given.
Suppose that
\begin{align}\label{strictlambda}
\lambda(x)+(r+\epsilon)\lambda(e)\leq\lambda(y)
\end{align} for some $x,y\in S$
and for all $\lambda\in F(S)$. By restricting to $\lambda$ such that  $\lambda(e)\neq 0$, and making $\epsilon$
smaller, we may assume
that the inequality is strict. Choose $m,n\in \N$ such that  $r<\frac{m}{n}<r+\epsilon$.
Then $\lambda(nx+me)<\lambda(ny)$ for all $\lambda\in F(S)$ such that $\lambda(e)\neq 0$.
Let $x'\ll x$ and $e'\ll e$, and let $\gamma\colon S \to [0,\infty]$ be additive, order preserving, and satisfy $\gamma(0)=0$ and $\gamma(e) \neq 0$.  It follows that
\[
\gamma(nx'+me') \leq \tilde\gamma(nx' + me') < \tilde\gamma(ny) \leq \gamma(ny),
\]
where $\tilde\gamma$ is defined as in Lemma \ref{makelsc}.
Notice that $y$ is an order unit for $x'$ and $e'$, and that the
inequality $\gamma(nx'+me')<\gamma(ny)$ holds in particular for those $\gamma$ which satisfy $\gamma(y)=1$.
Now by \cite[Lemma 2.8]{Black-Ror} we have
\[Nnx'+Nme'+z+y\leq Nny+z\]
for some $N\in \N$ and $z\in S$ such that $z\le ky$ for some $k\in \N$. By \cite[Lemma 2.3]{Black-Ror} (with $y$ the order unit), we obtain
\[
N_1Nnx'+N_1Nme'+y\leq N_1Nny
\]
for some $N_1>0$. Let  $N_2$ be such that $x'\leq N_2y$. Then
\[
(N_2N_1Nn+1)x'+N_1Nme'\leq N_2N_1Nny
\]
Notice now that $\frac{N_2N_1Nm}{N_2N_1Nn}=\frac{m}{n}>r$. Since a similar inequality
may be obtained for any $x'\ll x$ and $e'\ll e$, we conclude by {\bf R2'} that
$x\leq y$. Thus, $r+\epsilon$ satisfies {\bf R1}.
\end{proof}

 \begin{dfs}\label{generalrc}
Let $A$ be a C$^*$-algebra with $a\in A\otimes \mathcal K$ full and positive.  The radius of comparison of $A$ relative to $a$, denoted by $r_{A,a}$, is the infimum of the numbers $r>0$ satisfying {\bf R1} (or {\bf R2'}) with respect to $(Cu(A),\langle a \rangle)$.
\end{dfs}

It is straightforward to see that Definition \ref{generalrc} coincides with Definition \ref{compactrc} when $\langle a \rangle$ is compact.  We chose to treat the compact case separately both because {\bf R2} is rather cleaner than {\bf R2'} and because the radius of comparison relative to a compact element has stronger permanence properties.  For the radius of comparison relative to a general full positive element $a$, we can nevertheless prove parts (i) and (ii) of Proposition \ref{compactproperties}.

\begin{props}\label{generalproperties} Let $A$ be a C$^*$-algebra, with $a \in A \otimes \mathcal{K}$ full and positive.
\begin{enumerate}
\item[(i)] For a closed two-sided $\sigma$-unital ideal $I$ of $A$ we have $r_{A/I,\pi(a)}\leq r_{A,a}$.
\item[(ii)] If $A$ is simple and $\langle a \rangle \ll \infty$ (e.g., $a=(b-\epsilon)_+$ for some $b$ and $\epsilon>0$), then
$r_{A,a}=0$ if and only if $Cu(A)$ is almost unperforated.
\end{enumerate}
\end{props}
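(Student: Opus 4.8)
The plan is to prove both parts by reducing to the defining properties {\bf R1} and {\bf R2'} and imitating the compact-element arguments of Proposition~\ref{compactproperties}, while accounting for the two new features of the general setting: $\langle a\rangle$ need no longer be compact, so that {\bf R2'}---with its quantifiers over $x'\ll x$ and $e'\ll\langle a\rangle$---must replace the cleaner {\bf R2}; and in (i) the lifting of an inequality across the quotient produces an error term in $Cu(I)$ that must be controlled uniformly. Throughout I use that the set of $r$ satisfying {\bf R2'} (equivalently {\bf R1}, by Proposition~\ref{radiusgeneral}) is upward closed with infimum $r_{A,a}$, so that any $r>r_{A,a}$ satisfies {\bf R2'}, and $r_{A,a}=0$ is equivalent to ``every $r>0$ satisfies {\bf R2'}''. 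Note also that $\pi(a)$ is full in $(A/I)\otimes\mathcal K$, so $r_{A/I,\pi(a)}$ is defined.

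For part (i), I would first record the structural facts needed: $Cu(\pi)\colon Cu(A)\to Cu(A/I)$ is a surjective morphism whose ``kernel'' is the ideal $Cu(I)\subseteq Cu(A)$; the quotient order is given by $Cu(\pi)(w)\le Cu(\pi)(v)$ iff for every $w'\ll w$ there is $z\in Cu(I)$ with $w'\le v+z$; and, since $I$ is $\sigma$-unital, $Cu(I)$ has a full element $\langle u\rangle$ (for $u$ strictly positive in $I\otimes\mathcal K$), so that $\infty\cdot\langle u\rangle$ is the largest element of $Cu(I)$ and dominates every $z\in Cu(I)$. Fix $r>r_{A,a}$ and suppose $\bar x,\bar y\in Cu(A/I)$ satisfy the hypothesis of {\bf R2'} relative to $\langle\pi(a)\rangle$. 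Lifting to $\tilde x,\tilde y\in Cu(A)$, I would verify {\bf R2'} for the pair $(\tilde x,\ \tilde y+\infty\cdot\langle u\rangle)$: given $\tilde x'\ll\tilde x$ and $e'\ll\langle a\rangle$, interpolate $\tilde x'\ll\tilde x_1\ll\tilde x$ and $e'\ll e_1'\ll\langle a\rangle$ (using {\bf P4}), apply the quotient hypothesis to $Cu(\pi)(\tilde x_1)\ll\bar x$ and $Cu(\pi)(e_1')\ll\langle\pi(a)\rangle$ to obtain $n,m$ with $\tfrac mn>r$ and $Cu(\pi)\big((n+1)\tilde x_1+me_1'\big)\le Cu(\pi)(n\tilde y)$, and then invoke the quotient-order description (with $(n+1)\tilde x'+me'\ll(n+1)\tilde x_1+me_1'$) to get $(n+1)\tilde x'+me'\le n\tilde y+z$ for some $z\in Cu(I)$. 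Since $z\le\infty\cdot\langle u\rangle$ and $n(\tilde y+\infty\cdot\langle u\rangle)=n\tilde y+\infty\cdot\langle u\rangle$, this yields $(n+1)\tilde x'+me'\le n(\tilde y+\infty\cdot\langle u\rangle)$ with a fixed target. Then {\bf R2'} in $A$ gives $\tilde x\le\tilde y+\infty\cdot\langle u\rangle$, and applying $Cu(\pi)$ (which kills $\langle u\rangle$) gives $\bar x\le\bar y$. Hence $r$ satisfies {\bf R2'} for the quotient; letting $r\downarrow r_{A,a}$ gives $r_{A/I,\pi(a)}\le r_{A,a}$.

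For part (ii), the direction ``almost unperforated $\Rightarrow r_{A,a}=0$'' is immediate and needs neither simplicity nor $\langle a\rangle\ll\infty$: the hypothesis of {\bf R2'} gives $(n+1)x'+me'\le ny$, hence $(n+1)x'\le ny$, so almost unperforation yields $x'\le y$, and taking the supremum over $x'\ll x$ gives $x\le y$; as the ratio played no role, {\bf R2'} holds for every $r>0$ and so $r_{A,a}=0$. For the converse, assume $r_{A,a}=0$ and let $(n+1)x\le ny$ with $x\neq0$ (the case $x=0$ being trivial). Simplicity makes $x$ full, so $\infty=\infty\cdot x=\sup_k kx$; since $\langle a\rangle\ll\infty$, there is $k$ with $\langle a\rangle\le kx$. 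Pick $r$ with $0<r<\tfrac1{nk}$; this $r$ satisfies {\bf R2'}. I then verify its hypothesis for $(x,y)$: given $x'\ll x$ and $e'\ll\langle a\rangle$ one has $x'\le x$ and $e'\le\langle a\rangle\le kx$, so setting $N=nj$ and $m=\lfloor(j-1)/k\rfloor$, the bound $1+mk\le j$ gives
\[
(N+1)x'+me'\le (N+1+mk)x\le (N+j)x=j(n+1)x\le njy=Ny,
\]
while $\tfrac mN=\tfrac{\lfloor(j-1)/k\rfloor}{nj}\to\tfrac1{nk}>r$, so $\tfrac mN>r$ for $j$ large. Thus the hypothesis of {\bf R2'} holds and {\bf R2'} delivers $x\le y$, i.e.\ $Cu(A)$ is almost unperforated.

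The main obstacle I anticipate is in part (i): because $\langle a\rangle$ is not compact one cannot argue from a single inequality as in Proposition~\ref{compactproperties}(i), and the error $z\in Cu(I)$ coming from each lifted inequality depends on the chosen $x'$ and $e'$. The decisive point is that $\sigma$-unitality of $I$ supplies the largest element $\infty\cdot\langle u\rangle$ of $Cu(I)$, into which all these errors can be absorbed at once, producing the single fixed comparison target $\tilde y+\infty\cdot\langle u\rangle$ that {\bf R2'} requires. In part (ii) the only delicate issue is quantitative: one must confirm that the best attainable ratio in the construction, namely $1/(nk)$, is bounded away from $0$, which is exactly what makes the freedom ``$r_{A,a}=0$ permits arbitrarily small $r$'' sufficient.
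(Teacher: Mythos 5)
Your argument is correct. Part (i) follows the paper's proof essentially verbatim: the paper likewise lifts the quotient inequalities and absorbs the resulting error terms into the largest element $z_I=\infty\cdot\langle u\rangle$ of $Cu(I)$ (this is exactly where $\sigma$-unitality of $I$ enters), applies {\bf R2'} in $Cu(A)$ to the fixed pair $(\tilde x,\tilde y+z_I)$, and pushes the conclusion back down through the quotient map. Your only additions are the interpolations $\tilde x'\ll\tilde x_1\ll\tilde x$ and $e'\ll e_1'\ll\langle a\rangle$, needed to invoke the quotient-order description cleanly, and the device of working with $r>r_{A,a}$ and letting $r\downarrow r_{A,a}$, which sidesteps the paper's tacit use of the (not obviously true) fact that the infimum $r_{A,a}$ itself satisfies {\bf R2'}; both are harmless refinements of the same argument.

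Part (ii) is where you genuinely diverge. The paper argues through functionals and {\bf R1} in both directions: for ``almost unperforated $\Rightarrow r_{A,a}=0$'' it shrinks $r$ to make the inequality strict for $\lambda\neq 0$ (using simplicity, since then $\lambda(\langle a\rangle)\neq 0$), regularizes via Lemma \ref{makelsc} as in the proof of Proposition \ref{radiusgeneral}, and invokes \cite[Proposition 3.2]{Ror-Zabs} to obtain $x'\leq y$ for each $x'\ll x$; for the converse it derives $\lambda(x)+\tfrac{1}{kn}\lambda(\langle a\rangle)\leq\lambda(y)$ from $knx+\langle a\rangle\leq k(n+1)x\leq kny$ and applies {\bf R1}. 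You instead stay entirely within {\bf R2'}: the first direction collapses to the observation that the {\bf R2'} hypothesis already contains $(n+1)x'\leq ny$, to which almost unperforation applies directly, and the converse is settled by the explicit witnesses $N=nj$, $m=\lfloor(j-1)/k\rfloor$, whose ratio tends to $1/(nk)>r$; your arithmetic checks out. This is legitimate because Definition \ref{generalrc} (via Proposition \ref{radiusgeneral}) identifies $r_{A,a}$ with the infimum of the $r$ satisfying {\bf R2'}, and, as you note, that set is upward closed. Your route is more elementary---no functionals, no appeal to R\o rdam's proposition---and it isolates the hypotheses sharply: it shows that ``almost unperforated $\Rightarrow r_{A,a}=0$'' holds in complete generality (no simplicity, no $\langle a\rangle\ll\infty$), with simplicity and compact containment used only where they are indispensable, namely to obtain $\langle a\rangle\leq kx$ in the converse. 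What the paper's functional route buys instead is alignment with the standard strict-comparison/almost-unperforation mechanism of R\o rdam, at the cost of the external citation and the strictness bookkeeping that your argument avoids.
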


\begin{proof}
{(i).} Let $\pi^*\colon Cu(A) \to Cu(A/I)$
denote the map induced by the quotient map $\pi\colon A\to A/I$.
Let $x,y\in Cu(A/I)$ satisfy {\bf R2'} for $e=\langle a \rangle$ and $r=r_{A,a}$. Choose $\tilde x$ and $\tilde y$,
lifts of $x$ and $y$ in $Cu(A)$, and let $x',e'\in Cu(A)$ be such that $\tilde x'\ll \tilde x$ and $e'\ll \langle a \rangle$.

We have $\pi^*(x')\ll x$ and $\pi^*(e')\ll \langle \pi(a) \rangle$, and so there are
$m,n\in \N$ such that $\frac{m}{n}>r$ and $(n+1)x'+me'\leq ny$. This inequality
lifts to
\[(n+1)\tilde x'+me'\leq n\tilde y+z_I\leq n(\tilde y+z_I),\]
where $z_I$ is the largest element of $Cu(I)$.
Since this holds for all $\tilde x'$ and $e'$, we conclude
that $\tilde x\leq \tilde y+z_I$. Passing to the quotient, we get $x\leq y$. This shows that $r_{A/I, \pi(a)}\leq r_{A,a}$.

\vspace{2mm}
\noindent
{(ii).} Suppose that $(n+1)x\leq ny$ for some $n\in \N$ and some $x,y\in Cu(A)$.
Since $A$ is simple, every element of $Cu(A)$ is full, and so $\langle a \rangle \leq kx$
for some $k$. We then have  $knx+\langle a \rangle \leq k(n+1)x\leq kny$, so that
\[
\lambda(x) + \frac 1 {kn}\lambda(\langle a \rangle) \leq \lambda(y), \ \forall \lambda \in F(Cu(A)).
\]
 Since $r_{A,a}=0$ we have $x\leq y$, so that $Cu(A)$ is almost unperforated.

 Now suppose that $Cu(A)$ is almost unperforated, and that $x,y \in Cu(A)$ satisfy
 \[
 \lambda(x) + r \lambda(\langle a \rangle) \leq \lambda(y)
 \]
 for each $\lambda \in F(Cu(A))$ and some $r>0$.  Shrinking $r$ slightly, we may assume that the inequality is strict for $\lambda\neq 0$.  Proceeding as in the proof of Proposition \ref{radiusgeneral}, we see that for any $x' \ll x$ we have $\gamma(x') < \gamma(y)$ for each $\gamma\colon Cu(A) \to [0,\infty]$ that is additive, order preserving, and satisfies $\gamma(0)=0$.  It now follows from \cite[Proposition 3.2]{Ror-Zabs} that $x' \leq y$, so that $x \leq y$ by taking a supremum.  This shows that $A$ satisfies {\bf R1} relative to $\langle a \rangle$ for arbitrarily small values of $r$, and so $r_{A,a}=0$.
\end{proof}

In the next section we consider the radius of comparison with respect to the largest element
of $Cu(A)$. Suppose that $A$ is $\sigma$-unital and let $a$ be a strictly positive element
of $A\otimes\mathcal K$.  Set $\infty = \langle a \rangle$, which is the maximum element in $Cu(A)$.
We then have $r_{A,\infty}<\infty\Leftrightarrow r_{A,\infty}=0$, and in turn this is equivalent to
\begin{align}\label{rinfty}
\lambda(y)=\infty \hbox{ for all non-zero }\lambda\in F(Cu(A))\Leftrightarrow y=\infty.
\end{align}
We shall see that this property is a strengthening of the Corona Factorization Property.
Notice also that if  $r_{A,a}<\infty$ for some full positive $a \in A \otimes \mathcal{K}$, then $r_{A,\infty}=0$.

\section{Applications to C$^*$-algebras with finite radius of comparison}\label{applications}

C$^*$-algebras with finite nonzero radius of comparison are pathological from a certain point of view:  they are not classifiable up to isomorphism via $\mathrm{K}$-theoretic invariants.  Theorem 5.11 of \cite{To-comp} exhibits, for each $r \in \mathbb{R}^+ \backslash \{0\}$, a unital simple C$^*$-algebra $A_r$ with radius of comparison $r$ such that the Elliott invariants of $A_r$ and $A_s$ are identical for any $r,s$.  We shall nevertheless prove here that C$^*$-algebras with finite radius of comparison do enjoy some good properties, and that the radius of comparison can even be used to characterise interesting structural properties of C$^*$-algebras.

\subsection{New examples}  Recall that a recursive subhomogeneous (RSH) algebra is an iterated pullback of the form
\begin{equation}\label{rshdef}
\left[ \cdots \left[ \left[ \mathrm{M}_{n_1}(\mathrm{C}(X_1)) \oplus_{C_1} \mathrm{M}_{n_2}(\mathrm{C}(X_2)) \right] \oplus_{C_2} \mathrm{M}_{n_3}(\mathrm{C}(X_3)) \right] \oplus_{C_3} \cdots \right] \oplus_{C_{l-1}} \mathrm{M}_{n_l}(\mathrm{C}(X_l))
\end{equation}
where each $X_i$ is a compact metric space and each $C_i$ has the form $\mathrm{M}_{n_{i+1}}(\mathrm{C}(Y_i))$ with $Y_i \subseteq X_{i+1}$ closed (see \cite{Phil-rsh}).  A unital separable ASH algebra is always an inductive limit of RSH algebras (\cite{nw}).
It was shown in \cite{To-comp} that an RSH algebra $A$ with decomposition as in (\ref{rshdef}) satisfies
\[
r_A \leq \min_{1 \leq i \leq l}  \frac{\mathrm{\dim}(X_i)}{2n_i}.
\]
In fact, slightly more precise information can be obtained, based on the fact that for $B=\mathrm{M}_n(\mathrm{C}_0(X))$ and
$b \in B_+$ strictly positive we have
\[
r_{A,a}\leq \left\{
\begin{array}{ll}
\frac{\dim X-2}{2n} & \hbox{ if $\dim X$ is even,}\\
\frac{\dim X-3}{2n} & \hbox{ if $\dim X$ is odd.}
\end{array}
\right.
\]
At any rate, if one has a unital inductive sequence $(A_i,\phi_i)$ of RSH algebras with the property that $\liminf r_{A_i} < \infty$, then the limit algebra $A$ satisfies $r_A < \infty$ by Proposition \ref{compactproperties}.  In particular, the linear (flat) dimension growth AH algebras considered in \cite{To-flat} have finite radius of comparison.

\begin{rems}{\rm
The Corona Factorization Property (CFP) was introduced by Kucer\-ovsky and Ng in \cite{Kuc-Ng}, and is related to the study of absorbing extensions.  It has several equivalent formulations.  An attractive one for $\sigma$-unital C$^*$-algebras is the following:
a $\sigma$-unital C$^*$-algebra $A$ has the CFP if whenever $B\subseteq A\otimes \mathcal K$ is $\sigma$-unital, full, hereditary, and satisfies that $M_n(B)$ is stable for some $n$, then $B$ is stable.
If $A \cong \mathrm{C}_0(X) \otimes \mathcal{K}$, then $A$ has the CFP whenever $X$ is finite-dimensional (and somewhat more generally);  the presence of the CFP in this case is a manifestation of finite-dimensionality.  Finite radius of comparison is, to some degree, a noncommutative generalization of finite-dimensionality for spaces, and so one might expect it to be related to the CFP, too.  This is indeed the case, as it follows from Theorem \ref{corona} below that finite radius of comparison implies the  CFP.  This phenomenon was already observed implicitly for certain unique trace C$^*$-algebras in \cite{Kuc-Ng-2}.}
\end{rems}

\subsection{Radius of comparison relative to $\infty$ and stability of hereditary subalgebras}
How can one characterise stable C$^*$-algebras?  Straightforward necessary conditions for stability are the absence of nonzero bounded 2-quasitraces and unital quotients, conditions collectively termed property (S) in \cite{OPR}.  These conditions, however, are not sufficient in general.  Examples of C$^*$-algebras with the property (S) that are not stable can be found
among the hereditary subalgebras of $\mathrm{C}([0,1]^\N)\otimes \mathcal K$. R\o rdam's  example of a simple C$^*$-algebra that is not stable but becomes
stable after tensoring with $\mathrm{M}_2$ also has the property (S) without being stable.
Here we will prove that if $Cu(A)$ has  finite radius
of comparison then the property (S) implies stability for the full hereditary subalgebras of
$A\otimes \mathcal K$, and so in particular for $A$. This generalizes \cite[Theorem 3.6]{HRW}, which covers the case of strict comparison.

In \cite[Proposition 4.9]{OPR}, it is shown that if $A$ is unital and a certain comparability condition is verified on $Cu(A)$ (named weak $\omega$-comparison in \cite{OPR}) then the property (S) implies stability for the $\sigma$-unital full hereditary subalgebras of $A\otimes\mathcal K$. In Theorem \ref{corona} below we remove the requirement that $A$ be unital and give a condition on $Cu(A)$ equivalent to the stability of every $\sigma$-unital full hereditary subalgebra of $A\otimes\mathcal K$ that has the property (S).

For elements $x$ and $y$ of an ordered semigroup, we write $x\leq_s y$
if \mbox{$(k+1)x\leq ky$} for some $k\in \N$.

\begin{thms}\label{corona}
Let $A$ be a C$^*$-algebra that contains a full element. Consider the following propositions:

\begin{enumerate}
\item[(i)] If  $(x_i)_{i=1}^\infty$ and $(y_i)_{i=1}^\infty$ are sequences  in $Cu(A)$
such that $x_{i-1}\leq x_i\leq_s y_i$ for all $i$ and $\sup_i x_i$ is a full element of $Cu(A)$, then $\sum_{i=1}^\infty y_i=\infty$.
\item[(ii)] If a $\sigma$-unital full hereditary subalgebra of $A\otimes \mathcal K$  has
the property (S), then it is stable.
\item[(iii)] $r_{A,\infty}=0$.
\end{enumerate}

Then (i) and (ii) are equivalent and are implied by (iii). If in every quotient of $A\otimes \mathcal K$ projections are finite then
(iii) is equivalent to (i) and (ii).
\end{thms}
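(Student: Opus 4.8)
## Proof Strategy for Theorem \ref{corona}

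The plan is to establish the cycle of implications by connecting the abstract Cuntz-semigroup condition (i), the operator-algebraic stability statement (ii), and the radius-of-comparison condition (iii), using the characterization \eqref{rinfty} of $r_{A,\infty}=0$ as the key pivot. First I would prove (iii)$\Rightarrow$(i), which should be the most direct step: assuming $r_{A,\infty}=0$, I would use the reformulation \eqref{rinfty} to the effect that any element of $Cu(A)$ with $\lambda(y)=\infty$ for all nonzero functionals $\lambda$ must equal the top element $\infty$. Given sequences with $x_{i-1}\le x_i \le_s y_i$ and $\sup_i x_i$ full, the relation $\le_s$ (i.e.\ $(k+1)x_i \le k y_i$) forces $\lambda(x_i) \le \lambda(y_i)$ for every functional, after which additivity and the fullness of $\sup_i x_i$ give $\lambda(\sum_i y_i)=\infty$ for all nonzero $\lambda$; invoking \eqref{rinfty} yields $\sum_i y_i = \infty$.

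The core equivalence (i)$\Leftrightarrow$(ii) is where the real work lies, since it translates between a purely order-theoretic statement in $Cu(A)$ and the stability of hereditary subalgebras. I expect this to rest on the established machinery relating stability of a $\sigma$-unital hereditary subalgebra $B = \overline{b(A\otimes\mathcal K)b}$ to the behaviour of $\langle b \rangle$ in $Cu(A)$—specifically the kind of criterion used in \cite{OPR} and \cite{HRW}, where property (S) is encoded as the absence of bounded functionals and of nonzero compact-maximal obstructions, and stability is encoded as the existence of a suitable rapidly increasing sequence approximating $\langle b \rangle$ whose ``remainders'' sum to the full element. Concretely, I would choose a rapidly increasing sequence $x_i \ll x_{i+1}$ with supremum $\langle b \rangle$ and, using $\sigma$-unitality and property (S), produce companion elements $y_i$ witnessing the relation $x_i \le_s y_i$; condition (i) then delivers $\sum y_i = \infty$, which is precisely the summability condition one feeds into the stability characterization to conclude $B$ is stable. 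The reverse direction (ii)$\Rightarrow$(i) would run contrapositively: given sequences violating (i), I would realize them as the Cuntz data of an explicitly constructed full hereditary subalgebra of $A\otimes\mathcal K$ having property (S) but failing the internal summability condition that governs stability, hence not stable.

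The hard part will be the final clause: under the hypothesis that projections are finite in every quotient of $A\otimes\mathcal K$, upgrading to (i)$\Rightarrow$(iii). Here I would argue contrapositively, assuming $r_{A,\infty} \ne 0$ and hence (by the dichotomy $r_{A,\infty}<\infty \Leftrightarrow r_{A,\infty}=0$ noted before \eqref{rinfty}) the failure of the equivalence in \eqref{rinfty}—that is, there exists $y \ne \infty$ with $\lambda(y)=\infty$ for all nonzero $\lambda$. The finiteness-of-projections hypothesis is what should let me manufacture, from such a $y$, sequences $(x_i)$ and $(y_i)$ satisfying the premises of (i) but with $\sum y_i \ne \infty$, thereby violating (i). The delicacy is that without finiteness one can have purely infinite behaviour (as in the purely infinite simple case, where $r_{A,\infty}=0$ trivially but the functional structure is degenerate), so the finiteness assumption must be used precisely to ensure enough functionals exist to detect the gap between $y$ and $\infty$ and to control the compact/finite elements entering the construction. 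I anticipate that reconciling the order-theoretic supremum conditions in (i) with the value-$\infty$ condition on functionals, while respecting finiteness, will be the subtle point requiring the most care.
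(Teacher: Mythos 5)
Your first two steps match the paper's proof in both outline and mechanism. For (iii)$\Rightarrow$(i) the paper does exactly what you describe: from $x_i\leq_s y_i$ one gets $\lambda(x_i)\leq\lambda(y_i)$, hence $\lambda\bigl(\sum_i y_i\bigr)\geq n\lambda(\sup_i x_i)$ for all $n$, and fullness of $\sup_i x_i$ forces $\lambda\bigl(\sum_i y_i\bigr)=\infty$ for every nonzero $\lambda$, so \eqref{rinfty} applies. For (i)$\Rightarrow$(ii) the paper likewise runs the Hjelmborg--R{\o}rdam criterion of \cite{hjelmborg-rordam}, feeding the rapidly increasing sequence $\langle (b-\tfrac1i)_+\rangle$ with supremum $\langle b\rangle$ into a refinement of \cite[Lemma 4.5]{OPR} (Lemma \ref{s-hjelmborg-rordam}); the one point your sketch glosses over is that the companion elements $b_i$ with $\langle (b-\tfrac1i)_+\rangle\leq_s\langle b_i\rangle$ must be produced \emph{mutually orthogonal} and orthogonal to $(b-\epsilon)_+$---this orthogonality is precisely what the refined lemma delivers and what lets the Murray--von Neumann equivalent copy of $(b-\epsilon)_+$ land orthogonally. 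For (ii)$\Rightarrow$(i), the paper argues directly rather than contrapositively (represent $y_i=\langle b_i\rangle$ with $b_i$ orthogonal and summable, verify property (S) for $\overline{b(A\otimes\mathcal K)b}$, conclude $\sum_i y_i=\langle b\rangle=\infty$ from stability), but your contrapositive packaging is the same construction; note you would still need the paper's check that this subalgebra has no unital quotients (a unital quotient would make a finite partial sum $\sum_{i\leq n}b_i$ invertible, killing the tail and contradicting fullness of $\sup_i x_i$).

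The genuine gap is in the final clause, which you misidentify as the hard part and for which your plan is not an argument. You propose to prove (i)$\Rightarrow$(iii) contrapositively by manufacturing, from a single $y\neq\infty$ with $\lambda(y)=\infty$ for all nonzero $\lambda$, sequences violating (i)---and you concede you do not see the mechanism. The missing idea is that one should instead prove (ii)$\Rightarrow$(iii) directly, which is short: write $y=\langle a\rangle$ and consider $B=\overline{a(A\otimes\mathcal K)a}$ (full, since otherwise the functional $\lambda_y$ that is $0$ on elements dominated by $\infty\cdot y$ and $\infty$ elsewhere would be nonzero with $\lambda_y(y)=0$). Then $B$ has no bounded $2$-quasitraces by hypothesis on $y$; and if $B$ had a unital quotient, the finiteness-of-projections assumption would make the unit \emph{stably finite}, so by Blackadar--Handelman \cite{BH} it would carry a bounded $2$-quasitrace, inducing a nonzero functional finite at $y$---a contradiction. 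Hence $B$ has property (S), so (ii) makes it stable, whence $y=\infty$ and \eqref{rinfty} gives $r_{A,\infty}=0$. Your contrapositive route can in principle be forced through, but only by first making this same property-(S) verification for $B$ and then re-running the Lemma \ref{s-hjelmborg-rordam} machinery inside $B$ to build the offending sequences---i.e., it secretly composes (ii)$\Rightarrow$(iii) with (i)$\Rightarrow$(ii). As written, your sketch lacks the one step that makes the clause work: using finiteness of projections to rule out unital quotients of $B$ via the existence of quasitraces on stably finite unital C$^*$-algebras, rather than to ``detect the gap between $y$ and $\infty$'' by functionals.
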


\begin{rems}{\rm
By Proposition \ref{stable-unitization} and Remark \ref{bad-ideal} below, we see that there are C$^*$-algebras $A$ which satisfy $r_{A,\infty}=0$ but such that (ii) does not hold for ideals of $A$.}
\end{rems}

Before proving Theorem \ref{corona} we need the following lemma, which is a slight refinement of \cite[Lemma 4.5]{OPR}.
In the statement of this lemma $F(B)$ denotes the set $\{c\in B_+\mid ec=c\hbox{ for some }e\in B_+\}$.

\begin{lms}\label{s-hjelmborg-rordam}
Let $B$ be a $\sigma$-unital C$^*$-algebra with the property (S) and let $b\in B_+$ be strictly positive. Then for every $a\in F(B)$ and $\epsilon>0$
there is $c\in B_+$ such that $ac=0$, $a+c\in F(B)$, $\langle a\rangle \leq_s \langle c\rangle $, and $\langle (b-\epsilon)_+\rangle\leq_s \langle c\rangle$.
\end{lms}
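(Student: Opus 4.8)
The plan is to reduce the statement to the (unrefined) orthogonal-splitting construction of \cite[Lemma 4.5]{OPR}, from which I only need the following output: if $B$ is $\sigma$-unital with property (S) and $d\in F(B)$, then there is $c\in B_+$ with $dc=0$, $d+c\in F(B)$, and $\langle d\rangle\leq_s\langle c\rangle$. Granting this ``engine'', the refinement amounts to choosing $d\in F(B)$ large enough to Cuntz-dominate both $a$ and the tail $(b-\epsilon)_+$ of the strictly positive element, so that the two $\leq_s$-comparisons in the conclusion come out of a single application.

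First I would manufacture one element of $F(B)$ that Cuntz-dominates both relevant elements. Since $a\in F(B)$, pick $e_a\in B_+$ with $e_a a=a$; taking adjoints gives $a e_a=a$, so $a=e_a a e_a\in\overline{e_aBe_a}$. As $b$ is strictly positive, choose $g\in C_0((0,\|b\|])$ with $0\leq g\leq1$ and $g\equiv1$ on $[\epsilon,\|b\|]$; then $g(b)(b-\epsilon)_+=(b-\epsilon)_+$, so $(b-\epsilon)_+\in\overline{g(b)Bg(b)}$, and $g(b)\in F(B)$. Put $d:=e_a+g(b)$, which again lies in $F(B)$ since $F(B)$ is closed under addition and admits local units. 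Because $e_a\leq d$ and $g(b)\leq d$, the hereditary subalgebras satisfy $\overline{e_aBe_a},\overline{g(b)Bg(b)}\subseteq\overline{dBd}$, whence $a,(b-\epsilon)_+\in\overline{dBd}$ and in particular $\langle a\rangle,\langle(b-\epsilon)_+\rangle\leq\langle d\rangle$.

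Next I would feed this $d$ into the engine to obtain $c\in B_+$ with $dc=0$, $d+c\in F(B)$, and $\langle d\rangle\leq_s\langle c\rangle$, and then verify the four required conclusions. Orthogonality $ac=0$ follows from $a\in\overline{dBd}$ together with $d\perp c$. For $a+c\in F(B)$, any local unit $h$ of $d+c$ acts as a unit on $\overline{(d+c)B(d+c)}=\overline{dBd}\oplus\overline{cBc}$, hence $ha=a$ and $hc=c$, so $h(a+c)=a+c$. Finally, since $\leq$ followed by $\leq_s$ yields $\leq_s$ (if $(k+1)\langle d\rangle\leq k\langle c\rangle$ and $\langle a\rangle\leq\langle d\rangle$ then $(k+1)\langle a\rangle\leq k\langle c\rangle$), the chains $\langle a\rangle\leq\langle d\rangle\leq_s\langle c\rangle$ and $\langle(b-\epsilon)_+\rangle\leq\langle d\rangle\leq_s\langle c\rangle$ give exactly $\langle a\rangle\leq_s\langle c\rangle$ and $\langle(b-\epsilon)_+\rangle\leq_s\langle c\rangle$.

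The main obstacle is the engine itself: producing, from property (S) alone, an orthogonal $c$ that is Cuntz-large in the \emph{scaling} sense $\langle d\rangle\leq_s\langle c\rangle$ rather than merely full. Here both halves of property (S) are essential and play distinct roles. The absence of unital quotients guarantees that $d$ never exhausts any quotient of $B$, so that genuine room orthogonal to $d$ (with $d+c$ still in $F(B)$) can always be split off; the absence of a nonzero bounded $2$-quasitrace forces every nonzero functional in $F(Cu(B))$ to be strictly larger on this orthogonal room than on $\langle d\rangle$, which is what one converts into the concrete inequality $(k+1)\langle d\rangle\leq k\langle c\rangle$. I expect this last conversion---passing from a uniform strict inequality of functionals (including those taking the value $\infty$) to an algebraic multiple-comparison---to be the delicate point, carried out as in Propositions \ref{radius12} and \ref{radiusgeneral} via \cite[Lemmas 2.3 and 2.8]{Black-Ror} with a full order unit, or alternatively via \cite[Proposition 3.2]{Ror-Zabs}. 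It is precisely because property (S) is strictly weaker than stability that one obtains only $\leq_s$, and not $\leq$, at this stage.
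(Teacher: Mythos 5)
Your overall strategy is the same as the paper's: manufacture a single element $d\in F(B)$ whose hereditary subalgebra contains both $a$ and $(b-\epsilon)_+$, apply \cite[Lemma 4.5]{OPR} once to $d$, and transport the four conclusions exactly as you do --- orthogonality to $d$ passes to $\overline{dBd}$, a local unit for $d+c$ is a left unit on $\overline{(d+c)B(d+c)}$, and $x\leq y\leq_s z$ implies $x\leq_s z$. Those verifications are all correct (your identity $\overline{(d+c)B(d+c)}=\overline{dBd}\oplus\overline{cBc}$ is false because of cross terms $dxc$ with $x\in B$, but you only use the two inclusions, so this is harmless). The genuine gap is the single line asserting $d:=e_a+g(b)\in F(B)$ ``since $F(B)$ is closed under addition and admits local units.'' First, as literally set up this can fail: an arbitrary local unit $e_a$ for $a$ need not itself lie in $F(B)$ --- in $B=\mathrm{C}_0((0,2])$ take $a$ supported in $[1,2]$ and $e_a(t)=\min(t,1)$, so that $e_aa=a$ but $\mathrm{supp}(e_a)=(0,2]$ and hence $d\geq e_a$ admits no exact local unit --- and likewise your $g$ is only required to vanish \emph{at} $0$, whereas $g(b)\in F(B)$ needs $g$ to vanish on a neighbourhood of $0$ (e.g.\ $g(t)=\min(t/\epsilon,1)$ satisfies your conditions but $g(b)$ is essentially strictly positive). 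Both slips are repairable by functional calculus: since $(e_a-1)a=0$ gives $h(e_a)a=h(1)a$ for continuous $h$, replace $e_a$ by $h(e_a)$ with $h(1)=1$ and $h\equiv 0$ near $0$, and take $g$ supported in $[\epsilon/2,\|b\|]$. Second, even after these repairs, closure of $F(B)$ under addition is not a formality: if $0\leq u\leq 1$ and $u(x_1+x_2)=x_1+x_2$, positivity forces $ux_1=x_1$ and $ux_2=x_2$, so you need a \emph{common} exact local unit for the two summands; this is true (every positive element of the Pedersen ideal admits an exact local unit) but must be proved or cited, and you do neither.

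It is worth seeing how the paper's proof of Lemma \ref{s-hjelmborg-rordam} is engineered precisely to avoid all of this: with $fa=a$ and $ef=f$, the element $\tilde b:=e+(1-e)b(1-e)$ is \emph{strictly positive} in $B$, so the cut-down $(\tilde b-\delta)_+$ lies in $F(B)$ by functional calculus of one element alone; the inequality $(f-\delta)_+\leq(\tilde b-\delta)_+$ puts $a$ into the hereditary subalgebra of $(\tilde b-\delta)_+$, and $\tilde b\geq b/2$ yields $\langle(b-\epsilon)_+\rangle\leq\langle(\tilde b-\delta)_+\rangle$ for suitable $\delta$. A related caution: you quote the ``engine'' in the form ``for every $d\in F(B)$ there is $c$ with $dc=0$, $d+c\in F(B)$, $\langle d\rangle\leq_s\langle c\rangle$,'' but the paper invokes \cite[Lemma 4.5]{OPR} only for the cut-down $(\tilde b-\delta)_+$ of a strictly positive element; if the lemma is stated in that cut-down form rather than for arbitrary elements of $F(B)$, your engine is itself a strengthening whose proof requires exactly the $\tilde b$-device above, and your reduction is incomplete until that step is inserted. (Your closing paragraph speculating on how the engine is proved via functionals and \cite{Black-Ror} plays no role in your argument, since you use the lemma as a black box.)
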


\begin{proof}
Let $e,f\in B_+$ be such that $fa=a$ and $ef=f$. Then the element
\[
\tilde b:=e+(1-e)b(1-e)\geq \frac b 2
\]
is strictly positive in $B$. Thus, there is $\delta>0$
such that $\langle (b-\epsilon)_+\rangle \leq \langle(\tilde b-\delta)_+\rangle$.
Since  $(f-\delta)_+a=(1-\delta)a$ and $(f-\delta)_+\leq (\tilde b-\delta)_+$,
we have $a\in \overline{(\tilde b-\delta)_+B(\tilde b-\delta)_+}$. Now by \cite[Lemma 4.5]{OPR} there exists $c\in B_+$ such that
$(\tilde b-\delta)_+c=0$, $(\tilde b-\delta)_+ + c\in F(B)$, and
$\langle (\tilde b-\delta)_+\rangle \leq_s \langle c\rangle$. This is the desired element $c$.
\end{proof}

\noindent
We can now prove Theorem \ref{corona}.  Recall that positive elements $a,b$ in a C$^*$-algebra $A$ are said to be Murray-von Neumann equivalent if there exists $x \in A$ such that $x^*x=a$ and $xx^*=b$.

\begin{proof}
(i)$\Rightarrow$(ii). We follow the same line of reasoning used in  the proof of  \cite[Proposition 4.8]{OPR}.
Let $B$ be a $\sigma$-unital full hereditary subalgebra of $A\otimes \mathcal K$ with the property (S). Let $b\in B_+$ be a strictly positive element of $B$.
In order to show that $B$ is stable, it suffices, by the Hjelmborg-R\o rdam stability criterion (see \cite[Theorem 2.1 and Proposition 2.2]{hjelmborg-rordam}),
to show that for every $\epsilon>0$ there is $c\in B_+$ such that $(b-\epsilon)_+$ is Murray-von Neumann equivalent to  $c$ and $bc=0$. Starting with
the positive element $(b-\epsilon)_+$, and repeatedly applying
Lemma \ref{s-hjelmborg-rordam}, we find a sequence of elements $b_i\in B_+$, $i=1,2,\dots$,  such that $(b-\epsilon)_+,b_1,b_2,\dots$ are mutually orthogonal,
$\langle (b-\epsilon)_+\rangle \leq_s \langle b_1\rangle \leq_s \langle b_2\rangle \cdots$, and  $\langle (b-\frac{1}{i})_+\rangle \leq_s \langle b_i\rangle $ for all $i$. Since
$\sup_i \langle (b-\frac{1}{i})_+\rangle =\langle b\rangle$ and $\langle b\rangle$ is full, we conclude from (i) that $\sum_{i=1}^\infty \langle b_i\rangle =\infty$. In particular,
$\langle b\rangle\leq \sum_{i=1}^\infty \langle b_i\rangle$. This implies that $(b-\epsilon)_+$ is Murray-von Neumann equivalent to an element $c$ in the hereditary subalgebra generated by $\sum_{i=1}^n b_i$. The elements  $(b-\epsilon)_+$ and $c$ are orthogonal, since $(b-\epsilon)_+$
is orthogonal to $\sum_{i=1}^n b_i$. Thus, $B$ is stable.

(ii)$\Rightarrow$ (i).
Say $y_i=\langle b_i\rangle $ for $i=1,2,\dots$, with $b_1,b_2,\dots$ mutually orthogonal and such that $b=\sum_{i=1}^\infty b_i$ is convergent.
Let us show that the hereditary subalgebra $B=\overline{b(A\otimes \mathcal K)b}$ has the property (S).
We have $\lambda(\sum_{i=1}^\infty y_i)=\infty$ for all non-zero $\lambda\in F(Cu(A))$ (see the proof of (iii)$\Rightarrow$(i)). Therefore,  $B$ has no non-zero bounded 2-quasitraces (a bounded 2-quasitrace on $B$ would extend to $A\otimes \mathcal K$ and give rise to a functional finite on $\sum_{i=1}^\infty y_i$). Suppose a quotient of $B$ is unital. Say, for simplicity, that $B$ is unital.
Since $\sum_{i=1}^\infty b_i$ is strictly
positive it is invertible, whence $\sum_{i=1}^n b_i$ is invertible for some $n$. This implies that  $b_{i}=0$ for all $i>n$ (since these elements are orthogonal to an invertible element), which  contradicts the fact that $\sup_i x_i$ is  full. Thus, $B$ has the property (S), and so by (ii) it is stable.
This implies that $y=\langle b\rangle=\infty$.

(iii) $\Rightarrow$ (i). Set $\sum_{i=1}^\infty y_i=y$. By \eqref{rinfty}, it suffices to verify that $\lambda(y)=\infty$ for all non-zero $\lambda\in F(Cu(A))$. We have $\lambda(y)\geq n\lambda(x_i)$ for all $i$ and all $n$. Taking supremum over $i$ we get $\lambda(y)\geq n\lambda(\sup_i x_i)$ for all $n$. Taking suprmeum over $n$ and using that $\sup_i x_i$ is full, we get
$\lambda(y)=\infty$ for all $\lambda\in F(Cu(A))$ non-zero.

Finally, suppose that in every quotient of  $A\otimes \mathcal K$ projections are finite and let us show that (ii)$\Rightarrow$(iii).
Let $y\in Cu(A)$ be such that $\lambda(y)=\infty$ for every non-zero $\lambda\in F(Cu(A))$.
 Say $y=\langle a\rangle $. Let us show that $\overline{a(A\otimes\mathcal K)a}$ is stable. It clearly has no
bounded 2-quasitraces. If a quotient of it is unital, then it would have a bounded 2-quasitrace since the unit would be stably finite. Thus, $\overline{a(A\otimes\mathcal K)a}$ has no unital quotients either.
It follows that it is stable. Hence, $y=\langle a\rangle =\infty$.
\end{proof}

\begin{rems}\label{rA_finite}{\rm
Notice that if $r_{A,a}<\infty$ for some full element $a\in A\otimes\mathcal K$ then $r_{A,\infty}=0$ and so we have (ii).
Theorem \ref{corona} (ii), in turn, implies the Corona Factorization Property for $A$.
Notice also that the condition that the projections in every quotient of $A\otimes \mathcal K$  be finite
implies that in order to verify the property (S) on a hereditary subalgebra of $A\otimes\mathcal K$ it suffices to show
that the algebra has no bounded 2-quasitraces. The lack of unital quotients follows automatically from this, as demonstrated
in the last paragraph of the proof of the preceding theorem.}
\end{rems}


If we seek to characterise in terms of the Cuntz semigroup the fact that every hereditary subalgebra $B$ of
$A \otimes \mathcal K$ with the property (S) is stable (without assuming that $B$ is full), then we
must have that (i) holds for all the closed two-sided ideals of $A\otimes \mathcal K$ that contain a
full element. This turns out to
be equivalent to the property of $\omega$-comparison in the Cuntz semigroup.
Recall from \cite{OPR} that
 $Cu(A)$ has the $\omega$-comparison property if $x\leq_s y_i$,
for $i=1,2,\dots$, implies $x\leq \sum_{i=1}^\infty y_i$.

The implication (i)$\Rightarrow$(ii) in the following corollary is the content of
\cite[Proposition 4.8]{OPR}.

\begin{cors}
The following propositions are equivalent.

(i) $Cu(A)$ has the $\omega$-comparison property.

(ii) If $B\subseteq A\otimes \mathcal K$ is $\sigma$-unital, hereditary,   and  has the property (S),  then it is stable.
\end{cors}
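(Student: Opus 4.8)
My plan is to prove the two implications separately, quoting \cite[Proposition 4.8]{OPR} for (i)$\Rightarrow$(ii) (as noted above) and concentrating the effort on the converse. Throughout I write $D = A \otimes \mathcal{K}$, and I use two standard facts: every closed two-sided ideal of $D$ has the form $\mathcal{I} = \mathcal{J}\otimes\mathcal{K}$, and $Cu(\mathcal{J}) = Cu(\mathcal{I})$ sits inside $Cu(A)$ as an order ideal, i.e.\ a subsemigroup that is hereditary for $\leq$ and closed under suprema of increasing sequences; in particular it has a largest element, which I denote $\infty_{\mathcal{I}}$, and the sums and suprema I form in $Cu(\mathcal{I})$ agree with those in $Cu(A)$. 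For (i)$\Rightarrow$(ii) I would simply invoke \cite[Proposition 4.8]{OPR}: a $\sigma$-unital hereditary $B$ with property (S) is full in the ideal $\mathcal{I}$ it generates, $Cu(\mathcal{I})$ inherits $\omega$-comparison as an order ideal, and the resulting stability of $B$ is exactly the content of that proposition.

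The substance is (ii)$\Rightarrow$(i), and the first step is a reduction to Theorem \ref{corona} applied to ideals. I claim that (ii) forces Theorem \ref{corona}(i) to hold in $Cu(\mathcal{I})$ for every ideal $\mathcal{I}$ of $D$ containing a full element. Indeed, a $\sigma$-unital full hereditary subalgebra of $\mathcal{I}$ is, a fortiori, a $\sigma$-unital (not necessarily full) hereditary subalgebra of $D$, hence stable by (ii); this is precisely proposition (ii) of Theorem \ref{corona} for the algebra $\mathcal{J}$ with $\mathcal{J}\otimes\mathcal{K}=\mathcal{I}$, so the \emph{unconditional} equivalence (i)$\Leftrightarrow$(ii) of that theorem gives Theorem \ref{corona}(i) for $Cu(\mathcal{J})=Cu(\mathcal{I})$.

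The second step is a cutting lemma bridging the gap between the fullness-free hypothesis of $\omega$-comparison and the fullness of $\sup_i x_i$ demanded in Theorem \ref{corona}(i): \emph{if $u \in Cu(\mathcal{I})$ and $v \in Cu(A)$ satisfy $u \leq_s v$, then there is $\bar v \in Cu(\mathcal{I})$ with $\bar v \leq v$ and $u \leq_s \bar v$ (with the same witnessing integer).} Granting this, I conclude as follows. Given $x \leq_s y_i$ for all $i$ (the case $x=0$ being trivial), write $x = \langle a \rangle$ and let $\mathcal{I}$ be the ideal of $D$ generated by $a$, so that $x$ is full in $Cu(\mathcal{I})$. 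Applying the cutting lemma with $v = y_i$ produces $\bar y_i \in Cu(\mathcal{I})$ with $\bar y_i \leq y_i$ and $x \leq_s \bar y_i$. Feeding the constant sequence $x_i \equiv x$ (whose supremum $x$ is full) and the sequence $(\bar y_i)$ into Theorem \ref{corona}(i) for $Cu(\mathcal{I})$ yields $\sum_i \bar y_i = \infty_{\mathcal{I}}$; since $x \in Cu(\mathcal{I})$ this gives $x \leq \infty_{\mathcal{I}} = \sum_i \bar y_i \leq \sum_i y_i$, which is $\omega$-comparison.

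It remains to prove the cutting lemma, and this is the step I expect to be the main obstacle, since $u \leq_s v$ pulls $u$ into the ideal of $v$ while I need a witness lying in the (possibly smaller) ideal $\mathcal{I}$. I would argue concretely. Write $u \leq_s v$ as $a^{\oplus(k+1)} \precsim c^{\oplus k}$ with $a \in \mathcal{I}_+$ and $c \in D_+$, fix $\gamma>0$, and use the standard fact that $d \precsim e$ implies $(d-\gamma)_+ = w e w^*$ for some $w$ to write $(a-\gamma)_+^{\oplus(k+1)} = w\, c^{\oplus k}\, w^*$. Setting $s := (c^{\oplus k})^{1/2} w^* w (c^{\oplus k})^{1/2}$ gives $s \sim (a-\gamma)_+^{\oplus(k+1)}$, so $\langle s\rangle = (k+1)\langle(a-\gamma)_+\rangle$; moreover $s$ generates the same ideal as $(a-\gamma)_+$ and lies in $\overline{c^{\oplus k}\mathrm{M}_k(D)c^{\oplus k}}=\mathrm{M}_k(\overline{cDc})$, so entrywise $s \in \mathrm{M}_k(F)$ with $F := \mathcal{I}\cap\overline{cDc}$. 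Taking $\bar v$ to be the class of the hereditary subalgebra $F$ (so $\bar v \in Cu(\mathcal{I})$ and $\bar v \leq \langle c\rangle = v$), one gets $\langle s\rangle \leq k\bar v$, i.e.\ $(k+1)\langle(a-\gamma)_+\rangle \leq k\bar v$; letting $\gamma \to 0$ and passing to the supremum yields $(k+1)u \leq k\bar v$, that is $u \leq_s \bar v$. The points I would check with care are that $s$ genuinely lands in $\mathrm{M}_k(F)$ (combining that Cuntz-equivalent elements generate the same ideal with the containment $s \in \mathrm{M}_k(\overline{cDc})$) and that the class of $F$ dominates $\langle s\rangle$ in the required normalized way, so that the single element $\bar v$, independent of $\gamma$, does the job.
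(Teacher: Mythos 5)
Your proof is correct, and its overall architecture is the same as the paper's: both reduce the corollary to Theorem \ref{corona} applied to the ideal $\mathcal{I}$ generated by a representative of $x$, the crux in the direction (ii)$\Rightarrow$(i) being a cutting step that replaces each $y_i$ by some $\bar y_i\leq y_i$ lying in $Cu(\mathcal{I})$ with $x\leq_s \bar y_i$. The differences are in implementation. For (i)$\Rightarrow$(ii) you cite \cite[Proposition 4.8]{OPR}, which the paper explicitly identifies as the content of that implication, whereas the paper reproves it by showing $\omega$-comparison yields condition (i) of Theorem \ref{corona} in every ideal with a full element; either is legitimate. For the cutting step, the paper simply sets $y_i'=\langle c b_i c\rangle$ with $c$ strictly positive in $\mathcal{I}$ and \emph{asserts} $x\leq_s y_i'$, while you construct, via R\o rdam's lemma, an element $s_\gamma$ Murray--von Neumann equivalent to $(a-\gamma)_+^{\oplus(k+1)}$ lying in $\mathrm{M}_k(F)$ with $F=\mathcal{I}\cap\overline{c(A\otimes\mathcal{K})c}$ (here your $c$ represents $y_i$), and take $\bar y_i$ to be the class of a strictly positive element $\bar c$ of $F$; since $\bar c^{\oplus k}$ is strictly positive in $\mathrm{M}_k(F)$, this gives $(k+1)\langle(a-\gamma)_+\rangle=\langle s_\gamma\rangle\leq k\bar y_i$ for every $\gamma$, and $\gamma\to 0$ yields $x\leq_s\bar y_i$ with the same $k$ --- all of which checks out, and in fact both constructions produce the same element of $Cu(\mathcal{I})$, the class of a strictly positive element of $\mathcal{I}\cap\overline{b_i(A\otimes\mathcal{K})b_i}$. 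So your route buys a complete verification of the inequality the paper leaves implicit, at the cost of a longer argument. One shared caveat: your phrase ``the class of the hereditary subalgebra $F$'' presupposes $F$ is $\sigma$-unital, which is not automatic for nonseparable $A$ (an ideal of a $\sigma$-unital algebra need not be $\sigma$-unital); but the paper makes the entirely analogous implicit assumption when it picks $c$ strictly positive in $\mathcal{I}$, an ideal generated by a single element, so you are held to the same standard.
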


\begin{proof}
Let us show that $\omega$-comparison is equivalent to having (i) of the previous theorem for every
closed two-sided ideal of $A$ that contains a full element.

Suppose that we have $\omega$-comparison. Let $I$ be a closed two-sided ideal with a full element.
Let $(x_i)$ and $(y_i)$ be sequences in the ordered semigroup $Cu(I)$--which we view as an order ideal of $Cu(A)$--that satisfy (i) of the previous theorem.
By the $\omega$-comparison property, we have  for each $j$ that $x_j\leq \sum_{k=1}^\infty y_{i_k}$, where $(y_{i_k})$
is any infinite subsequence of $(y_i)$. It follows that $\infty\cdot x_j\leq \sum_{i=1}^\infty y_{i}$.
Taking supremum over $j$ we get that $\infty_I=\sum_{i=1}^\infty y_{i}$, where $\infty_I$ denotes
the largest element of $Cu(I)$.

Suppose that we have (i) of the previous theorem for  every closed two-sided ideal  $I$
of $A$ containing a full element. Let $x$, $(y_i)_{i=1}^\infty$ be elements in $Cu(A)$ such that $x\leq_s y_i$ for $i=1,2,\dots$.
We can project the elements $y_i$ to elements $y_i'\leq y_i$ such that $x\leq_s y_i'\leq \infty\cdot x$.
More specifically, let $x=\langle a\rangle $, $y_i=\langle b_i\rangle$, and let $I$ be the closed
two-sided ideal generated by $a$. Let $c\in (I\otimes \mathcal K)^+$ be strictly positive. Set
$y_i'=\langle cb_ic\rangle $. Then $x\leq_s y_i'\leq y_i$
and $y_i'\leq \infty\cdot x$. By (i) of the previous theorem, $\sum_{i=1}^\infty y_i'=\infty \cdot x$. Therefore,
$x  \leq \sum_{i=1}^\infty y_i'\leq \sum_{i=1}^\infty y_i$.
\end{proof}

\subsection{More on stability: closed two-sided ideals.}
The covering dimension of an open subset $U$ of a locally compact Hausdorff space $X$ is bounded above by the covering dimension of $X$.
A natural noncommutative generalization of this fact would be that the radius of comparison of an ideal $I$ of a C$^*$-algebra $A$ is bounded by the radius of comparison of $A$.
We shall see in Remark \ref{bad-ideal}, however, that this is not the case.
The basic problem is that the radius of comparison of the algebra $A$ is defined with respect to a full element of $A$, which is therefore not a member of any proper ideal.
Nonetheless, finite radius of comparison for an algebra tells us something about stability of ideals, as in the next result.

\begin{props}\label{ideals}
Let $A$ be such that  $r_{A,a}$ is finite for $a\in A_+$ strictly positive. Then  a closed 2-sided ideal
$I$ of $A$ has property (S)  if and only if
$\mathrm{M}_n(I)$ is stable for $n>r_{A,a}$.
 \end{props}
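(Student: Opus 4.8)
The plan is to prove the two implications separately, with the forward direction (property (S) $\Rightarrow \mathrm{M}_n(I)$ stable) carrying essentially all of the weight. For the reverse direction I would argue that stability forces property (S) downward: if $\mathrm{M}_n(I)$ is stable then it has property (S), and property (S) descends to $I$ because a bounded $2$-quasitrace on $I$ amplifies to one on $\mathrm{M}_n(I)=I\otimes \mathrm{M}_n$, while a unital quotient $I/J$ yields the unital quotient $\mathrm{M}_n(I)/\mathrm{M}_n(J)=\mathrm{M}_n(I/J)$. This direction uses neither the radius of comparison nor the hypothesis $n>r_{A,a}$.

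For the forward direction, note first that property (S) passes up from $I$ to $\mathrm{M}_n(I)$ by the same amplification, so $\mathrm{M}_n(I)$ is a $\sigma$-unital C$^*$-algebra with property (S). Fix a strictly positive $d\in I_+$ and a strictly positive $b\in \mathrm{M}_n(I)_+$; then $\langle b\rangle=n\langle d\rangle$ in $Cu(I)=Cu(\mathrm{M}_n(I))$, and since $d\precsim a$ we have the matrix bound $\langle b\rangle=n\langle d\rangle\leq n\langle a\rangle$ in $Cu(A)$. To establish stability I would invoke the Hjelmborg--R\o rdam criterion \cite{hjelmborg-rordam} exactly as in the proof of Theorem \ref{corona}: repeated application of Lemma \ref{s-hjelmborg-rordam} to $\mathrm{M}_n(I)$ produces mutually orthogonal $b_1,b_2,\dots\in \mathrm{M}_n(I)_+$ with $x_i:=\langle (b-\tfrac1i)_+\rangle\leq_s \langle b_i\rangle=:y_i$, with $y_{i-1}\leq_s y_i$, and with $\sup_i x_i=\langle b\rangle$. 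It then suffices to prove $\langle b\rangle\leq \sum_i y_i$ in $Cu(I)$, for then every $\langle (b-\epsilon)_+\rangle\ll\langle b\rangle$ sits under a finite partial sum $\sum_{i\leq N}y_i$, producing the orthogonal Murray--von Neumann equivalent of $(b-\epsilon)_+$ required by the criterion.

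The inequality $\langle b\rangle\leq\sum_i y_i$ is where $n>r_{A,a}$ must enter. Choose $r$ with $r_{A,a}<r<n$; by Proposition \ref{radiusgeneral} this $r$ satisfies {\bf R1} for $(Cu(A),\langle a\rangle)$. Arguing as in the proof of Theorem \ref{corona}(iii)$\Rightarrow$(i) one checks $\lambda(\sum_i y_i)\geq N\lambda(\langle b\rangle)$ for all $N$ and all $\lambda\in F(Cu(A))$, whence $\lambda(\sum_i y_i)=\infty$ as soon as $\lambda(\langle b\rangle)>0$. Thus the only functionals for which the defect $r\lambda(\langle a\rangle)$ is not automatically absorbed are those vanishing on $I$, i.e.\ the bounded $2$-quasitraces of $A/I$. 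Padding the right-hand side to swallow these, {\bf R1} yields, for any integer $k\geq r$,
\[
\langle b\rangle\leq \sum_i y_i+k\langle a\rangle .
\]
Applying the restriction-to-the-ideal map $(\,\cdot\,)_I\colon Cu(A)\to Cu(I)$---the order-preserving, additive, supremum-preserving map obtained by cutting with an approximate unit of $I$, which fixes $Cu(I)$ pointwise and carries $\langle a\rangle$ to $\langle d\rangle$---converts this into
\[
n\langle d\rangle=\langle b\rangle\leq \sum_i y_i+k\langle d\rangle .
\]
One then wants to cancel the slack $k\langle d\rangle$, which is possible once $k<n$, so that the slack is strictly smaller than $\langle b\rangle$ and can be reabsorbed into the tail $\sum_{i>N}y_i$.

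The main obstacle is precisely this final cancellation. Because $r_{A,a}$ is measured against the full element $a$, whose class has nontrivial image in $Cu(A/I)$, neither {\bf R1} nor {\bf R2'} can be applied directly to the pair $(\langle b\rangle,\sum_i y_i)$---both of which lie in the order-ideal $Cu(I)$---since the quotient's quasitraces obstruct the defect term; paying the defect only on $A/I$ and discarding it by restricting to $I$ is therefore forced. The delicate point is that a naive integer padding is too lossy: an integer $k$ with $r_{A,a}<k<n$ need not exist (e.g.\ $r_{A,a}=2{.}5$, $n=3$), so one loses the fractional part of $r_{A,a}$ and misses the stated threshold. I expect the sharp argument must instead run the algebraic inequalities {\bf R2'} with ratios $m/n_0\in(r_{A,a},n)$ against the matrix bound $\langle b\rangle=n\langle d\rangle$ after projecting into $Cu(I)$, using Proposition \ref{generalproperties}(i) (which gives $r_{A/I,\pi(a)}\leq r_{A,a}<n$) to control the quotient uniformly, so that the \emph{fractional} comparison afforded by $n>r_{A,a}$ is what cancels the slack.
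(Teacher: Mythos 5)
Your reverse direction is fine (and matches the paper's one-line dismissal), but the forward direction has a genuine gap at exactly the point you flagged, and the repair you gesture at does not close it. First, the ``restriction-to-the-ideal map'' $(\,\cdot\,)_I\colon Cu(A)\to Cu(I)$ with the properties you claim (additive, order preserving, supremum preserving, fixing $Cu(I)$, sending $\langle a\rangle$ to $\langle d\rangle$) does not exist in general: cutting by a strictly positive $c\in (I\otimes\mathcal K)_+$ is defined on representatives but is not well defined or order preserving on Cuntz classes, and $Cu(I)$ is merely an order-ideal of $Cu(A)$ with no natural retraction onto it (the corollary after Theorem \ref{corona} uses $y_i'=\langle c b_i c\rangle$ only for specific elements, never as a semigroup map). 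Second, even granting $n\langle d\rangle\leq \sum_i y_i+k\langle d\rangle$ in $Cu(I)$, the cancellation of the slack $k\langle d\rangle$ is precisely what fails: $\mathbf{Cu}$-semigroups are not cancellative, and the needed comparison now lives inside $Cu(I)$, where the relevant normalized radius is $r_{I,d}$, a quantity not controlled by $r_{A,a}$ (Remark \ref{bad-ideal} shows ideals can behave badly in exactly this respect). Your fallback via Proposition \ref{generalproperties}(i) controls comparison in $Cu(A/I)$, which is the wrong semigroup for the inequality you need.

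The paper's proof avoids both obstacles by a different route. The fractional-threshold worry (your $r_{A,a}=2.5$, $n=3$ example) is dispatched by matrix rescaling: replacing $A$ by $\mathrm{M}_n(A)$ with strictly positive element $a\otimes 1_n$ gives {\bf R1} relative to $n\langle a\rangle$, i.e.\ radius $r_{A,a}/n<1$, and replaces $\mathrm{M}_n(I)$ by an ideal, so it suffices to show that $r_{A,a}<1$ and property (S) for $I$ imply $I$ stable. Then, instead of Lemma \ref{s-hjelmborg-rordam} and an infinite orthogonal sum, the paper uses the single-element form of the Hjelmborg--R\o rdam criterion: given $e,f,g\in I_+$ with $fe=e$ and $gf=f$, replace $a$ by the strictly positive element $g+(1-g)a(1-g)$, so that $af=f$. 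The no-unital-quotient half of (S) forces $a-f$ to be full (otherwise $\pi(a)=\pi(f)$ is the unit of $A/J$ for $J$ the ideal generated by $a-f$, making a quotient of $I$ unital), and since $\langle f\rangle\ll\langle a\rangle$ gives $\langle f\rangle\leq N\langle a-f\rangle$, the no-bounded-quasitrace half forces every $\lambda\in F(Cu(A))$ with $\lambda(\langle a-f\rangle)<\infty$ to satisfy $\lambda(\langle f\rangle)=0$; in all cases $\lambda(\langle f\rangle)+\lambda(\langle a\rangle)=\lambda(\langle a-f\rangle)$. Now {\bf R1} with $r<1$ yields $f\precsim a-f$ directly in $Cu(A)$ --- no restriction to $Cu(I)$, no cancellation --- and $a-f$ is orthogonal to $e$, which is what the criterion demands. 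The idea you were missing is that the defect term $\langle a\rangle$ need not be paid on $A/I$ and then discarded: one rigs the target $a-f$ so that it already contains a full copy of $a$, and property (S) guarantees that no functional can distinguish $a-f$ from $a+f$.
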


 \begin{proof}
It is clear that if $\mathrm{M}_n(I)$ is stable for some $n>r_{A,a}$ then
$I$ has property (S).
In order to prove the converse it is enough to consider the case that $r_{A,a}<1$
and show that if $I$ has property (S) then it is stable.
Suppose we are in this case. Let $e\in I_+$ be such that $fe=e$ and $gf=f$ for some $g,f\in I_+$.
By the Hjelmborg-R\o rdam criterion for stability ((i.e., \cite[Theorem 2.1 and Proposition 2.2]{hjelmborg-rordam})), in order to show that $I$ is stable it suffices
to find $x\in I$ such that $e=x^*x$ and $e$ is orthogonal to $xx^*$.

Notice that $g+(1-g)a(1-g)$ is a strictly positive element of $A$ and satisfies that $(g+(1-g)a(1-g))f=f$.
We replace $a$ by this element and assume that  $af=f$.
Let us show that $a-f$ is full. Let $J$  denote the closed two-sided ideal generated by $a-f$. The relation $af=f$ implies that $\pi(a)=\pi(f)$
is a projection, where $\pi$ is the quotient map onto $A/J$. Moreover, since $a$ is strictly positive
this projection must be the unit of $A/J$. Since $\pi(f)\in I/I\cap J$, we conclude that $I$ has
a unital quotient. This contradicts that $I$ has property (S). Thus, $a-f$ must be a full element of $A$.

Since $a-f$ is full, and $\langle f\rangle \ll \langle a\rangle $, we have $\langle f\rangle \leq N\langle a-f\rangle $ for some $N$. Consequently,
if $\lambda(\langle a-f\rangle )<\infty$ for some $\lambda\in F(Cu(A))$, then
\[
\lambda(\langle a\rangle )\leq \lambda(\langle a-f\rangle )+\lambda(\langle f\rangle )<\infty.
\]
Hence,
$\lambda$ is induced by a bounded 2-quasitrace on $A$. Since $I$ has the property (S), we must have
$\lambda(\langle f\rangle )=0$, and so $\lambda(\langle a\rangle )=\lambda(\langle a-f\rangle )$.
On the other hand, if
$\lambda(\langle a-f\rangle )=\infty$ then $\lambda(\langle a\rangle )=\lambda(\langle a-f\rangle )=\infty$. We conclude that
$\lambda(\langle a\rangle )=\lambda(\langle a-f\rangle )$ for all $\lambda\in F(Cu(A))$.
Thus,
\[ \lambda(\langle f\rangle )+\lambda(\langle a\rangle )=\lambda(\langle a-f\rangle )\,\hbox{ for all } \lambda\in F(Cu(A)). \]
Since $r_{A,a}<1$, we conclude that $f \precsim a-f$. Thus,
there is $x$ such that $e=x^*x$ and $xx^*\in \mathrm{Her}(a-f)$. Since
$a-f$ is orthogonal to $e$, we have that $e$ is orthogonal to $xx^*$.
This is the desired $x$.
 \end{proof}

In order to prove a sort of converse to Proposition \ref{ideals}, we derive some useful properties of stable C$^*$-algebras.
The first one is weaker than, though similar to, having stable rank one.
It is strong enough, however, to obtain that Murray-von Neumann equivalence of positive elements implies approximate unitary equivalence (Lemma \ref{mvn-aue}).

\begin{lms}\label{almost-sr1}
Let $A$ be a stable C$^*$-algebra.
Then every element of $A$ is the limit of invertible elements in the unitization $A^\sim$.
\end{lms}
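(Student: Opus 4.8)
The plan is to reduce to approximating a ``finitely supported'' element, and then to manufacture an invertible by a Schur-complement trick that exploits the extra room provided by stability.

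First I would normalise: it clearly suffices to treat $a$ with $\|a\|\leq 1$. Since $A$ is stable ($A\cong A\otimes\mathcal K$, and $A$ is necessarily non-unital) we may fix a system of matrix units $(e_{ij})_{i,j\geq 1}$ in the multiplier algebra $M(A)$ with $\sum_i e_{ii}=1$ strictly; the projections $P_n=\sum_{i\leq n}e_{ii}$ then increase strictly to $1$, so they act as an approximate unit and for any $\epsilon>0$ there is $n$ with $\|a-P_naP_n\|<\epsilon/2$. Write $x:=P_naP_n$. It is then enough to find an invertible element of $A^\sim$ within $\epsilon/2$ of $x$. The point to record is that every element of $A^\sim$ has the form $\lambda 1+c$ with $\lambda\in\C$ and $c\in A$, and that $\|(\lambda 1+c)-a\|\geq|\lambda|$ (passing to $A^\sim/A\cong\C$); thus the scalar part of any good approximant is automatically small, and the real task is to produce $c\in A$ close to $x$ together with a small $\lambda\neq 0$ for which $\lambda 1+c$ is invertible in $M(A)$, its inverse then lying in $A^\sim$.

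Next I would carry out the core construction. Using stability, choose a projection $q$ orthogonal to $p:=P_n$ and Murray--von Neumann equivalent to it, with implementing partial isometry $w\in M(A)$ satisfying $w^*w=p$ and $ww^*=q$. In the $2\times 2$ picture determined by $p$ and $q$, consider the element with blocks $x,\ \mu w^*,\ -\mu w,\ \lambda q$ in positions $(p,p),(p,q),(q,p),(q,q)$, together with $\lambda$ on the complementary projection $1-p-q$. A Schur-complement computation (eliminating the $q$-corner, on which the block is the invertible $\lambda q$) shows that for $\lambda\neq 0$ invertibility is governed by the corner element
\[
x+\tfrac{\mu^2}{\lambda}\,p .
\]
Choosing $\mu,\lambda$ small but with $\mu^2/\lambda$ large, this is invertible by a Neumann series because $\|x\|\leq 1$. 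Assembled, this yields an element of $M(A)$ that is invertible and within $\epsilon$ of $a$.

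The main obstacle is that this element does not yet lie in $A^\sim$: the terms $\lambda p$ and the partial isometries $w,w^*$ belong to $M(A)\setminus A^\sim$ (indeed $A$ may contain no nonzero projections at all, so the matrix units are genuinely multipliers). To repair this I would weight the offending multipliers by an approximate unit of the corner. Pick a positive contraction $h\in pAp\subseteq A$ with $\|hx-x\|,\|xh-x\|$ as small as desired, replace $\lambda p$ by $\lambda h$ and $w$ by $w_0:=wh^{1/2}\in A$, and set
\[
b:=\lambda 1+\bigl(x-\lambda h+\mu(w_0^*-w_0)\bigr),
\]
which genuinely lies in $A^\sim$ and is still within $\epsilon$ of $a$ once $\lambda,\mu$ are small. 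The crux is then to verify that invertibility in $M(A)$ survives this replacement. Here $w_0^*w_0=h^{1/2}ph^{1/2}=h$, so the relevant Schur complement becomes $x+\tfrac{\mu^2}{\lambda}h+\lambda(p-h)$; on the spectral region where $h\approx 1$ the term $\tfrac{\mu^2}{\lambda}h$ dominates $x$, while on the region where $h\approx 0$ one has $x\approx 0$ (since $x\approx hx$) and the positive term $\lambda(p-h)+\tfrac{\mu^2}{\lambda}h\geq\lambda p$ keeps it bounded below. Making the approximate-unit error $\|hx-x\|$ small compared with $\lambda$ therefore preserves invertibility. I expect this estimate---transferring the clean multiplier-level invertibility into the unitization while controlling the errors introduced by the approximate unit---to be the technical heart of the proof.
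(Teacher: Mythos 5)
Your argument is correct, but it takes a genuinely different route from the paper's, which is a short, estimate-free algebraic trick: writing $A=B\otimes\mathcal K$, the paper approximates a given element by a finite matrix $x\in B\otimes\mathrm{M}_n$, factors it as a product of two elements of $B\otimes\mathrm{M}_n$ via polar decomposition, and observes that in $B\otimes\mathrm{M}_n\otimes\mathrm{M}_2\subseteq A$ one has
\[
\begin{pmatrix} x & 0 \\ 0 & 0 \end{pmatrix}=\begin{pmatrix} 0 & c \\ 0 & 0 \end{pmatrix}\begin{pmatrix} 0 & 0 \\ d & 0 \end{pmatrix},
\]
a product of two nilpotents; since $N+\delta 1$ is invertible in $A^\sim$ for every nilpotent $N$ and $\delta\neq 0$ (finite Neumann series), and invertibles are closed under products, $x$ is a limit of invertibles with no norm estimates at all. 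You use stability in essentially the same way---room for an orthogonal copy of the corner $p=P_n$---but instead build explicit invertible approximants by a Schur-complement ``shift'' in the doubled corner, then transfer from $M(A)$ into $A^\sim$ by weighting the multipliers with an approximate unit; this is closer in spirit to classical stable-rank arguments, and it buys a quantitative statement (an invertible within an explicit distance of the given element) at the cost of the estimates you rightly flag as the technical heart. Those estimates do close exactly as you predict: with $g:=\lambda(p-h)+\frac{\mu^2}{\lambda}h\geq\lambda p$, functional calculus gives $\|g^{-1}h\|\leq\lambda/\mu^2$, hence $\|g^{-1}x\|\leq\lambda^{-1}\|x-hx\|+(\lambda/\mu^2)\|x\|<1$ once $\|x-hx\|<\lambda/4$ and $\mu^2/\lambda\geq 4$ (compatible with $\lambda=\mu^2/4\to 0$), so the Schur complement factors as $g(p+g^{-1}x)$ and is invertible---note this Neumann-series factorization is the clean substitute for your phrase ``bounded below,'' which is not quite the right notion for the non-self-adjoint complement; and your passage from invertibility in $M(A)$ to invertibility in $A^\sim$ is legitimate, either by inverse-closedness of unital C$^*$-subalgebras or directly since $b^{-1}=\lambda^{-1}(1-cb^{-1})\in A^\sim$ when $b=\lambda 1+c$ with $c\in A$. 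The only bookkeeping worth adding is that $w_0=qw_0p$ forces $w_0^2=(w_0^*)^2=0$, so $\|w_0^*-w_0\|\leq 1$ and $\|b-x\|\leq\lambda+\mu$, completing the $\epsilon$-count.
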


\begin{proof}
Let $A = B \otimes \mathcal{K}$.
Then finite matrices over $B$ form a dense subset of $A$, so it suffices to show that every finite matrix over $B$ is the limit of invertible elements in $A^\sim$.

Let $x \in B \otimes \mathrm{M}_n$.
Let $x=ab$ for some $a,b \in B \otimes \mathrm{M}_n$ (this is possible using polar decomposition of $x$).
Viewing these in $B \otimes \mathrm{M}_n \otimes \mathrm{M}_2 \subset A$, we have
\[ \begin{pmatrix} x & 0 \\ 0 & 0 \end{pmatrix} = \begin{pmatrix} 0 & a \\ 0 & 0 \end{pmatrix} \begin{pmatrix} 0 & 0 \\ b & 0 \end{pmatrix}. \]
Since every nilpotent is the limit of invertible elements, we see that the right-hand side is the limit of invertibles, as required.
\end{proof}

\begin{lms}\label{mvn-aue}
Let $A$ be a stable C$^*$-algebra, $x \in A$.
Then there exists a sequence of unitaries $u_n \in A^\sim, n=1,2,\dots$ such that
\[ u_nxx^*u_n^* \to x^*x. \]
\end{lms}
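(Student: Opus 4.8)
The plan is to reduce the problem to the case of invertible elements, where a genuine polar decomposition with a \emph{unitary} factor is available, and then pass to a limit. The point is that $x^*x$ and $xx^*$ share the same nonzero spectrum, so one expects them to be unitarily equivalent; the obstruction in a general C$^*$-algebra is only that the implementing partial isometry may fail to lie in $A$ (or to be unitary), and Lemma \ref{almost-sr1} is exactly what lets us sidestep this.

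First I would apply Lemma \ref{almost-sr1} to obtain a sequence $(x_n)$ of elements that are invertible in $A^\sim$ with $x_n \to x$. For each invertible $x_n$ the polar decomposition $x_n = u_n|x_n|$, with $|x_n| = (x_n^*x_n)^{1/2}$, has $u_n := x_n|x_n|^{-1}$ an honest unitary in $A^\sim$: since $x_n^*x_n$ is invertible and positive, $|x_n|$ is invertible in $A^\sim$, and one checks directly that $u_n^*u_n = u_nu_n^* = 1$. From $x_n = u_n|x_n|$ we obtain
\[ x_nx_n^* = u_n|x_n|^2u_n^* = u_n(x_n^*x_n)u_n^*, \]
so that the unitary $u_n^*$ conjugates $x_nx_n^*$ onto $x_n^*x_n$.

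It then remains to take the limit via a standard three-term estimate. Since $x_n \to x$ we have $x_nx_n^* \to xx^*$ and $x_n^*x_n \to x^*x$ in norm, and conjugation by a unitary is isometric, so
\[ \|u_n^*(xx^*)u_n - x^*x\| \leq \|xx^* - x_nx_n^*\| + \|x_n^*x_n - x^*x\| \longrightarrow 0. \]
Relabelling $u_n^*$ as the desired unitary yields the sequence claimed in the statement.

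The only genuine obstacle, and the reason Lemma \ref{almost-sr1} is invoked, is precisely that in an arbitrary C$^*$-algebra the partial isometry in a polar decomposition of $x$ need neither lie in $A$ nor be a unitary; passing first to invertible approximants circumvents this, since invertibility forces the polar factor to be a unitary in $A^\sim$. Everything beyond this is the routine norm bookkeeping displayed above.
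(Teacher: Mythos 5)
Your proof is correct and follows essentially the same route as the paper's: invoke Lemma \ref{almost-sr1} to approximate $x$ by invertibles $x_n \in A^\sim$, use the polar decomposition $x_n = u_n|x_n|$ (unitary since $x_n$ is invertible) to conjugate $x_nx_n^*$ onto $x_n^*x_n$, and pass to the limit using that conjugation by unitaries is isometric. Your explicit three-term estimate merely spells out the norm bookkeeping the paper compresses into its chain of limits, so there is nothing to add.
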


\begin{proof}
By Lemma \ref{almost-sr1}, take a sequence of invertible elements $x_n \in A^\sim$ which converge to $x$.
Let $x_n = u_n|x_n|$
be the polar decomposition of $x_n$; since $x_n$ is invertible, $u_n$ is a unitary in $A^\sim$.
We have
\begin{align*}
\lim_{n\to\infty} u_n^*xx^*u_n &= \lim_{n\to\infty} u_n^*x_nx_n^*u_n^* \\
&= \lim_{n\to\infty} |x_n|^2 \\
&= \lim_{n\to\infty} |x_n|^*u_n^*u_n|x_n| \\
&=\lim_{n\to\infty} x_n^*x_n \\
&= x^*x.\qedhere
\end{align*}
\end{proof}

\begin{props}\label{stable-unitization}
Let $A$ be a C$^*$-algebra with the property (S). Then $r_{A^\sim,1}+1$ is the least number $n$
such that $\mathrm{M}_n(A)$ is stable  (if no such number exists we set $n=\infty$).
\end{props}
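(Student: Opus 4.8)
The plan is to locate the least $n$ for which $\mathrm{M}_n(A)$ is stable --- call it $n_0$ --- by squeezing it between the two inequalities $n_0\le r_{A^\sim,1}+1$ and $n_0\ge r_{A^\sim,1}+1$. The first is essentially immediate from Proposition \ref{ideals}, while the second is the substantive direction and is where the two preceding lemmas on stable algebras do the work. A pleasant by-product of proving both inequalities is that the integrality of $r_{A^\sim,1}$ --- which the statement tacitly asserts, since $n$ ranges over integers --- need not be argued separately; it drops out of the squeeze. Throughout I note that property (S) forces $A$ to be non-unital, so $A^\sim$ is a genuine unitization in which the unit $1$ is full and compact, and hence $(Cu(A^\sim),\langle 1\rangle)$ is a legitimate pointed object with the clean characterization \textbf{R2} available.

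For the upper bound I would apply Proposition \ref{ideals} to the unital algebra $A^\sim$, taking $1$ as the strictly positive element and $I=A$ as the ideal (recalling that $A$ has property (S) as a standalone algebra). Proposition \ref{ideals} then yields that $\mathrm{M}_n(A)$ is stable whenever $n>r_{A^\sim,1}$, so every integer $n\ge r_{A^\sim,1}+1$ works and $n_0$ is at most the least integer exceeding $r_{A^\sim,1}$; when $r_{A^\sim,1}=\infty$ this step is vacuous. One technical caveat is that the stability half of Proposition \ref{ideals} passes through the Hjelmborg--R\o rdam criterion and so wants $A$ to be $\sigma$-unital, so in the general case I would first reduce to $\sigma$-unital $A$ by a hereditary/approximation argument.

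The heart of the matter, and where I expect the real difficulty, is the lower bound: $\mathrm{M}_n(A)$ stable $\Rightarrow r_{A^\sim,1}\le n-1$. Since Proposition \ref{ideals} is one-directional, this needs genuinely new input. I would verify \textbf{R2} with constant $n-1$ for $(Cu(A^\sim),\langle 1\rangle)$ directly, first exploiting property (S) to pin down the functionals on $Cu(A^\sim)$: a bounded $2$-quasitrace on $A^\sim$ restricts to one on $A$, hence must vanish on $A$ and be a multiple of the scalar trace $\tau_0$ on $A^\sim/A=\mathbb{C}$, while all remaining functionals are $\infty$-valued and arise from ideals of $A$. Consequently the \textbf{R1} hypothesis $\lambda(x)+r\lambda(\langle 1\rangle)\le\lambda(y)$ (for $r$ just above $n-1$) collapses to a \emph{scalar-rank gap} $d_{\tau_0}(y)\ge d_{\tau_0}(x)+n$ together with fullness/ideal-domination conditions from the unbounded functionals. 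Writing $x=\langle a\rangle$, $y=\langle b\rangle$ with $a,b\in(A^\sim\otimes\mathcal{K})_+$, the gap says that $b$ carries $n$ extra units beyond $a$ at the quotient $A^\sim/A$. The plan is then to absorb this rank-$n$ scalar excess into an orthogonal summand inside the ideal using the stable structure of $\mathrm{M}_n(A)$: the Hjelmborg--R\o rdam criterion provides the orthogonal room, and Lemma \ref{mvn-aue} (via Lemma \ref{almost-sr1}) lets one move a compactly supported positive element of $\mathrm{M}_n(A)$ off itself by unitaries of the unitization, thereby realizing $(a-\epsilon)_+\precsim b$ for all $\epsilon>0$ and hence $a\precsim b$. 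I expect the delicate point to be the bookkeeping that matches the abstract scalar-rank gap of $n$ to concrete orthogonal room in the stable algebra $\mathrm{M}_n(A)$, while the ideal-functional conditions guarantee that no part of $a$ is lost at a proper ideal.

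Finally I would assemble the bounds. The upper bound gives $n_0\le$ (least integer $>r_{A^\sim,1}$) $\le r_{A^\sim,1}+1$, and the lower bound applied to $n=n_0$ gives $r_{A^\sim,1}\le n_0-1$, i.e. $n_0\ge r_{A^\sim,1}+1$. These pinch $n_0=r_{A^\sim,1}+1$ and simultaneously force $r_{A^\sim,1}$ to be an integer; and if $r_{A^\sim,1}=\infty$, the contrapositive of the lower bound shows no $\mathrm{M}_n(A)$ is stable, matching the convention $n_0=\infty$.
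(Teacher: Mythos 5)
Your architecture is the paper's: the upper bound by applying Proposition \ref{ideals} to $A^\sim$ with $1$ strictly positive and $I=A$, and the lower bound by showing directly that $r=n-1+\epsilon$ satisfies {\bf R1} for $(Cu(A^\sim),\langle 1\rangle)$, using property (S) to collapse the normalised functionals to the scalar rank functional coming from $A^\sim/A\cong\mathbb{C}$. That reduction is correct as far as it goes. But your proof stops exactly where the work begins: the ``bookkeeping'' you defer to is the entire content of the lower bound, and your sketch does not contain the two ideas that make it go through. First, the operative consequence of stability of $\mathrm{M}_n(A)$ is the isomorphism $\mathrm{M}_n(A)\cong\mathrm{M}_n(A)\otimes\mathcal{K}\cong A\otimes\mathcal{K}$, so that \emph{every} positive element of $A\otimes\mathcal{K}$ is Murray--von Neumann equivalent to an element of $\mathrm{M}_n(A)$; this is how a gap of exactly $n$ scalar units gets matched to the ideal parts --- the whole ideal part of $a$ is compressed into a single copy of $\mathrm{M}_n(A)$, with no $\epsilon$-losses and, notably, no role at all for the $\infty$-valued functionals (the paper's proof uses only the one scalar functional; your expectation that ``ideal-functional conditions guarantee that no part of $a$ is lost at a proper ideal'' mislocates the difficulty). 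Second, and more delicate: writing $b=b'+m$ with $b'\in A\otimes\mathcal{K}$ and $m\in\mathcal{K}_+$ normalised to $m=1_k\oplus m'$, the ideal part of $b$ need not be positive where it matters, so one cannot simply ``absorb the rank-$n$ excess.'' The paper handles this by the cut-down $b\cugeq 1_kb1_k=1_k+1_kb'1_k\geq 1_k-b''$, where $b''$ is the \emph{negative part} of $1_kb'1_k$; then $b''$ is moved, via MvN equivalence into $\mathrm{M}_n(A)$ and Lemma \ref{mvn-aue} (applicable because $\mathrm{M}_k(A)$ is stable for $k\geq n$ by R\o rdam), to a copy $b'''\in\mathrm{M}_n(A)$ orthogonal to the compressed $a''$ with $\|a''\|\leq 1$, giving $b\cugeq 1_k-b''\cueq 1_k-b'''\geq 1_{k-n}+a''\cugeq a$. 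Without the negative-part trick there is no route from ``$n$ extra units at the quotient'' to Cuntz domination in $A^\sim\otimes\mathcal{K}$, and this is a genuine missing step, not routine bookkeeping.

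Two smaller points. Your description of Lemma \ref{mvn-aue} as letting one ``move a compactly supported positive element off itself by unitaries'' is not what it does: it upgrades Murray--von Neumann equivalence to approximate unitary equivalence in a stable algebra, and it is used precisely to transport the orthogonality arranged for $b'''$ back to $b''$. On the positive side, your upper bound is the paper's (a one-line application of Proposition \ref{ideals}), your observation that integrality of $r_{A^\sim,1}$ drops out of the squeeze is sound, and your $\sigma$-unitality caveat about the Hjelmborg--R\o rdam criterion is a fair one, which the paper itself passes over silently.
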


\begin{proof}
Let $n$ be the least number for which $\mathrm{M}_n(A)$ is stable.
From Proposition \ref{ideals} we obtain the bound \mbox {$n\leq r_{A^\sim,1}+1$}. Let us prove that
$r_{A^\sim,1}\leq n-1$.

Set $B:=A^{\sim}$.
Let $a,b \in (B \otimes \mathcal K)_+$ be positive elements satisfying
\begin{equation}
\lambda(\langle a\rangle ) +(n-1+\epsilon)\lambda(\langle 1\rangle ) \leq \lambda(\langle b\rangle ) \quad \text{for all }\lambda \in F(Cu(B)), \label{Ideal-Example-StrictIneq} \end{equation}
for some $\epsilon>0$. Express $a = a' + l$, $ b = b' + m$ where $a',b' \in A \otimes \mathcal K$ and
$l,m \in \mathcal K_+$. Then by letting $\lambda$ in \eqref{Ideal-Example-StrictIneq} be the normalised functional that vanishes on $A$, we have
\[ \mathrm{rank }(l) +n \leq  \mathrm{rank}(m). \]
By replacing $a$ and $b$ by Cuntz equivalent elements, we may suppose that $l = 1_{k-n}$ and $m = 1_{k} \oplus m'$ for some $k\in \N$ and $m'\in \mathcal K_+$.
Then $a \culeq 1_{k-n} \oplus |a'|$.
Moreover, since $\mathrm{M}_n(A)$ is stable, $|a'|$ is Murray-von Neumann equivalent to an element $a''$ of $\mathrm{M}_n(A)$, so that
$a \culeq 1_{k-n} \oplus a''$.
Let us show that $1_{k-n} \oplus a'' \culeq b$. We have
\[
b\cugeq 1_kb1_k = 1_k+1_kb'1_k \geq 1_k-b'' \geq 0,
\]
where $b''\in \mathrm{M}_k(A)_+$ denotes the negative part of $1_kb'1_k$.

Since $\mathrm{M}_n(A)$ is stable, $b''$ is Murray-von Neumann equivalent to some element $b''' \in \mathrm{M}_n(A)$.
By replacing $b'''$ and $a''$ by Murray-von Neumann equivalents, we may assume that $a''b''' = 0$ and that $\|a''\| \leq 1$.
In particular, this implies that
\[ 1_n-b''' \geq a''. \]

By \cite[Proposition 2.1]{Ror-stab}, $\mathrm{M}_k(A)$ is stable, and so Murray-von Neumann equivalent elements of $\mathrm{M}_k(A)$ are approximately unitarily
equivalent by Lemma \ref{mvn-aue}.
Thus, there are unitaries $u_m\in \mathrm{M}_k(A)^\sim$ such that $u_mb_-''u_m^*\to b'''$, and so
$u_m(1_k-b'')u_m^*\to 1_k-b'''$. We have
\begin{align*}
b \cugeq 1_k-b'' &\cueq 1_k-b''' \geq 1_{k-n}+a''\cugeq a.\qedhere
\end{align*}
\end{proof}

\begin{rems}\label{bad-ideal}{\rm
R\o rdam showed in \cite{Ror-stab} that for every natural number $n$, there exists a simple, stably finite algebra $A$ for which $\mathrm{M}_{n+1}(A)$ is stable but $\mathrm{M}_n(A)$ is not.
By Proposition \ref{stable-unitization} we see that for such an algebra $r_{A^\sim}=n$.
This shows that Theorem \ref{corona} cannot be improved to yield the Corona Factorization
Property for the ideals of $A$.}
\end{rems}

\subsection{When is $W(A)$ hereditary?}
Recall that $W(A)$ is the subsemigroup of $Cu(A)$ of elements $\langle a \rangle$
with $a\in \mathrm{M}_n(A)_+$ for some $n$.  In fact, $W(A)$ is the original definition of the Cuntz semigroup.  Here we consider the question of when this subsemigroup is hereditary, i.e., has the property that if $x \leq y$ in $Cu(A)$ and $y \in W(A)$, then $x \in W(A)$.  We prove that finite radius of comparison suffices. This result was previously unknown, even in the case of strict comparison.

\begin{thms}\label{hereditary}
Let $A$ be a C$^*$-algebra for which the projections in every quotient of $A\otimes \mathcal K$
are finite. Let $a\in A_+$ be strictly positive and suppose that
  $r_{A,a} < k\in \N$. If $\langle b \rangle \in Cu(A)$ is such that
\[\lambda(\langle b \rangle)\leq n\lambda(\langle a \rangle)\hbox{ for all }\lambda\in F(Cu(A)),\]
for some $n\in \mathbb{N}$, then  $b$ is Murray-von Neumann equivalent to an element of $\mathrm{M}_{2(n+k)}(A)_+$.
In particular,  $W(A)$ is a hereditary subset of $Cu(A)$.
\end{thms}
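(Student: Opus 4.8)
The plan is to first convert the functional hypothesis into a single Cuntz comparison, and then to promote that comparison to a genuine Murray--von Neumann equivalence with an element of a finite matrix algebra.

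First I would produce the Cuntz comparison. Since $r_{A,a}<k$, fix a real number $r$ with $r_{A,a}<r<k$. The set of numbers satisfying {\bf R1} for $(Cu(A),\langle a\rangle)$ is upward closed and has infimum $r_{A,a}$ (Proposition \ref{radiusgeneral}), so this $r$ satisfies {\bf R1}. Put $y:=(n+k)\langle a\rangle=\langle c\rangle$, where $c:=a\oplus\cdots\oplus a\in \mathrm{M}_{n+k}(A)_+$ ($n+k$ copies), and set $m:=n+k$. For every $\lambda\in F(Cu(A))$ the hypothesis $\lambda(\langle b\rangle)\leq n\lambda(\langle a\rangle)$ gives
\[
\lambda(\langle b\rangle)+r\lambda(\langle a\rangle)\leq (n+r)\lambda(\langle a\rangle)\leq (n+k)\lambda(\langle a\rangle)=\lambda(y),
\]
so {\bf R1} yields $\langle b\rangle\leq\langle c\rangle$, that is, $b\precsim c$ with $c\in \mathrm{M}_m(A)_+$. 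Note that finiteness of projections plays no role here.

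The heart of the proof is then the following assertion, which I would isolate as a self-contained lemma: if $b\in (A\otimes\mathcal K)_+$ satisfies $b\precsim c$ for some $c\in \mathrm{M}_m(A)_+$, and projections in every quotient of $A\otimes\mathcal K$ are finite, then $b$ is Murray--von Neumann equivalent to an element $d\in \mathrm{M}_{2m}(A)_+$. To attack this, I would use the standard Rørdam-type characterisation of Cuntz subequivalence to write, for each $\epsilon>0$, $(b-\epsilon)_+=r_\epsilon\, c\, r_\epsilon^*$ for a suitable $r_\epsilon$; then $s_\epsilon:=r_\epsilon c^{1/2}$ satisfies $s_\epsilon s_\epsilon^*=(b-\epsilon)_+$ and $g_\epsilon:=s_\epsilon^*s_\epsilon=c^{1/2}r_\epsilon^*r_\epsilon c^{1/2}\in \mathrm{Her}(c)\subseteq \mathrm{M}_m(A)$, so $(b-\epsilon)_+$ is Murray--von Neumann equivalent to $g_\epsilon$. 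Letting $\epsilon\to 0$ along $\epsilon_j\downarrow 0$, the elements $(b-\epsilon_j)_+$ increase to $b$, and the task is to glue the equivalences $(b-\epsilon_j)_+\sim g_{\epsilon_j}$ into a single equivalence $b\sim d$ landing in a finite matrix algebra. Here I would exploit that $A\otimes\mathcal K$ is stable: by Lemmas \ref{almost-sr1} and \ref{mvn-aue}, Murray--von Neumann equivalent positive elements of $A\otimes\mathcal K$ are approximately unitarily equivalent, which is exactly the ingredient needed to run an Elliott-type approximate intertwining producing $d$ as a limit of conjugates of the $g_{\epsilon_j}$. The passage to $\mathrm{M}_{2m}(A)=\mathrm{M}_2(\mathrm{M}_m(A))$ supplies the extra coordinate needed to interleave the mutually orthogonal cut-downs $(b-\epsilon_j)_+-(b-\epsilon_{j-1})_+$ and to convert approximate unitary equivalences into an honest Murray--von Neumann equivalence---the same $2\times2$ ``rotation'' device already used in the proof of Lemma \ref{almost-sr1}---while the hypothesis that projections in every quotient are finite is what guarantees that the limiting element $d$ does not absorb an infinite projection and hence actually lies in $\mathrm{M}_{2m}(A)$ rather than only in $A\otimes\mathcal K$.

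The main obstacle I anticipate is precisely this gluing step: controlling the limit of the $g_{\epsilon_j}$ so that it converges within $\mathrm{M}_{2m}(A)$ and realises $b$ up to Murray--von Neumann equivalence, and pinning down exactly how finiteness keeps the construction from escaping the finite matrix algebra; the reduction in the first paragraph and the deduction below are routine by comparison. Finally, for the ``in particular'' clause I would argue as follows. Assume $r_{A,a}<\infty$ and suppose $\langle b\rangle\leq\langle c'\rangle$ with $\langle c'\rangle\in W(A)$, say $c'\in \mathrm{M}_N(A)_+$. Since $a$ is strictly positive, $\mathrm{Her}(a)=A$ and hence $\mathrm{Her}(a\oplus\cdots\oplus a)=\mathrm{M}_N(A)$ ($N$ copies), so $c'\precsim a\oplus\cdots\oplus a$ and $\langle c'\rangle\leq N\langle a\rangle$. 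Thus $\lambda(\langle b\rangle)\leq\lambda(\langle c'\rangle)\leq N\lambda(\langle a\rangle)$ for all $\lambda\in F(Cu(A))$; choosing $k\in\N$ with $r_{A,a}<k$ and applying the theorem with $n=N$, we find $b$ Murray--von Neumann equivalent to an element of $\mathrm{M}_{2(N+k)}(A)_+$, so $\langle b\rangle\in W(A)$. Hence $W(A)$ is a hereditary subset of $Cu(A)$.
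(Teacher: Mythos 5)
Your opening reduction is correct, and in fact it coincides with the paper's own first step: the paper's inequality \eqref{dtau-ineq} likewise yields $\langle b\rangle\leq\langle a\otimes 1_{n+k}\rangle$, and your deduction of the ``in particular'' clause at the end is also sound. The genuine gap is the lemma you isolate as the heart of the proof: that $b\precsim c$ with $c\in \mathrm{M}_m(A)_+$, together with finiteness of projections in quotients, already forces $b$ to be Murray--von Neumann equivalent into $\mathrm{M}_{2m}(A)_+$. Note first that this claim makes no reference to the radius of comparison at all; if it held, the hypothesis $r_{A,a}<k$ would be used only once, to manufacture the single comparison $\langle b\rangle\leq (n+k)\langle a\rangle$, and $W(A)$ would be hereditary for \emph{every} algebra with finite projections in quotients --- far beyond what the theorem asserts (the paper emphasizes that hereditariness was previously unknown even under strict comparison, and poses the general case as an open question). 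Second, the sketched proof of the lemma does not go through. The cut-down step is fine: each $(b-\epsilon)_+$ is Murray--von Neumann equivalent to some $g_\epsilon\in\mathrm{Her}(c)\subseteq\mathrm{M}_m(A)$. But the gluing fails. Lemma \ref{mvn-aue} requires the ambient algebra to be stable; $\mathrm{M}_{2m}(A)$ need not be stable, so the unitaries it supplies live in $(A\otimes\mathcal K)^\sim$ and conjugating by them does not preserve the corner $\mathrm{M}_{2m}(A)$. More fundamentally, an approximate intertwining needs the successive equivalences to be \emph{coherent}: each new spectral piece of $b$ must be placed orthogonally to the images of the earlier pieces, which requires that it be Cuntz-subequivalent to the leftover room in $\mathrm{M}_m(A)$ after the earlier images are removed. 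A one-shot inequality $\langle b\rangle\leq\langle c\rangle$ gives no control whatsoever over that leftover element, and your appeal to finiteness (``the limit does not absorb an infinite projection'') names no mechanism that keeps the construction inside the finite matrix algebra.

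This is precisely the difficulty the paper's proof is engineered around, and it shows why the comparison slack cannot be spent in one application. The paper splits $b=f(b)+\tilde f(b)$ along interleaved spectral intervals (this is where the factor $2$ in $2(n+k)$ comes from), writes $f(b)=\sum_i f_i(b)$ with gaps between the supports, and builds $z=\sum_i z_i$ recursively with $z_i^*z_j=0$ for $i\neq j$. The induction hypothesis carried along is not a Cuntz comparison but the functional inequality with slack, $\lambda(\langle h_i(b)\rangle)+(r_{A,a}+\epsilon)\lambda(\langle a\rangle)\leq\lambda(\langle y_i\rangle)$, where $y_i$ is the remaining room; at each stage this is propagated to \eqref{hcomp} and then reconverted, via the definition of $r_{A,a}$, into the honest comparison $\langle h_{i+1}(b)\rangle\leq\langle y_i-s_is_i^*\rangle$ needed to place the next piece. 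The finiteness hypothesis enters at exactly one technical point: when $\lambda(\langle g_i(b)\rangle)=\infty$, one must show $y_i-s_is_i^*$ is full, and a failure of fullness would produce a full, hence finite, projection in a quotient and thereby a nonzero densely finite functional vanishing on $\langle y_i-s_is_i^*\rangle$, contradicting \eqref{hcomp}. To repair your argument you must abandon the reduction to a single subequivalence $b\precsim c$ and instead retain the term $(r_{A,a}+\epsilon)\lambda(\langle a\rangle)$ as a standing resource throughout the recursion, as the paper does.
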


\noindent
Before proving Theorem \ref{hereditary}, we need a lemma.

\begin{lms}
If $b_1$ and $b_2$ are Murray-von Neumann equivalent to elements in $\mathrm{M}_n(A)$ then $b_1+b_2$ is Murray-von Neumann equivalent to an element  in $\mathrm{M}_{2n}(A)$.
\end{lms}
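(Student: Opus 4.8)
The plan is to realize $b_1+b_2$ as $z^*z$ for a suitable $z$, chosen so that $zz^*$ automatically lands in $\mathrm{M}_{2n}(A)$. First I would unpack the hypothesis: pick $x_1,x_2\in A\otimes\mathcal K$ with $x_i^*x_i=b_i$ and $x_ix_i^*=c_i$, where $c_i\in \mathrm{M}_n(A)_+$. Let $p_n\in M(A\otimes\mathcal K)$ be the projection with $p_n(A\otimes\mathcal K)p_n=\mathrm{M}_n(A)$ (the sum of the first $n$ diagonal matrix units of $\mathcal K$).

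The key observation I would establish is that $p_nx_i=x_i$ for each $i$. Indeed, since $c_i=x_ix_i^*\in\mathrm{M}_n(A)$ we have $c_i=p_nc_ip_n$, and hence
\[
[(1-p_n)x_i][(1-p_n)x_i]^*=(1-p_n)x_ix_i^*(1-p_n)=(1-p_n)c_i(1-p_n)=0,
\]
which forces $(1-p_n)x_i=0$. The payoff is that the cross terms are controlled: $x_1x_2^*=p_n(x_1x_2^*)p_n\in\mathrm{M}_n(A)$, and likewise $x_2x_1^*\in\mathrm{M}_n(A)$. This is the crux of the argument; without it the off-diagonal entries appearing below would be arbitrary elements of $A\otimes\mathcal K$ rather than members of $\mathrm{M}_n(A)$.

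With this in hand I would pass to $\mathrm{M}_2(A\otimes\mathcal K)$ and set
\[
z=\begin{pmatrix} x_1 & 0 \\ x_2 & 0 \end{pmatrix}.
\]
A direct computation gives $z^*z=(b_1+b_2)\oplus 0$ and
\[
zz^*=\begin{pmatrix} c_1 & x_1x_2^* \\ x_2x_1^* & c_2 \end{pmatrix},
\]
so by the previous paragraph every entry of $zz^*$ lies in $\mathrm{M}_n(A)$, whence $zz^*\in \mathrm{M}_2(\mathrm{M}_n(A))=\mathrm{M}_{2n}(A)$. Since $z^*z$ and $zz^*$ are Murray-von Neumann equivalent, and $(b_1+b_2)\oplus 0$ is in turn Murray-von Neumann equivalent to $b_1+b_2$ (embed $A\otimes\mathcal K$ into a corner of $\mathrm{M}_2(A\otimes\mathcal K)\cong A\otimes\mathcal K$ via an isometry $s\in M(A\otimes\mathcal K)$, so that $s(b_1+b_2)^{1/2}$ witnesses the equivalence), transitivity of Murray-von Neumann equivalence yields that $b_1+b_2$ is Murray-von Neumann equivalent to $zz^*\in\mathrm{M}_{2n}(A)$.

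I expect the only delicate point to be the support argument giving $p_nx_i=x_i$, since that is precisely what confines the cross terms $x_1x_2^*$ to $\mathrm{M}_n(A)$; the remainder is routine bookkeeping with the $2\times 2$ amplification together with the standard fact that a positive element is Murray-von Neumann equivalent to its image in a corner.
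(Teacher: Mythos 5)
Your proof is correct and is essentially the paper's argument in different notation: the paper identifies $\mathrm{M}_{2n}(A)$ with $\mathrm{M}_2\otimes\mathrm{M}_n(A)$, moves $b_1,b_2$ to Murray-von Neumann equivalent \emph{orthogonal} elements in the two diagonal $n\times n$ blocks, and adds the implementing elements $x_1+x_2$ --- which is exactly your column matrix $z$ after composing with the two corner embeddings. Your write-up merely makes explicit what the paper's ``it is clear'' glosses over (the support fact $p_nx_i=x_i$ forcing the cross terms $x_1x_2^*$ into $\mathrm{M}_n(A)$, and the block identification $\mathrm{M}_2(\mathrm{M}_n(A))=\mathrm{M}_{2n}(A)$), so no substantive difference or gap.
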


\begin{proof}
Upon identifying $\mathrm{M}_{2n}(A)$ with $\mathrm{M}_{2} \otimes \mathrm{M}_{n}(A)$ it is clear that there are orthogonal elements $\tilde{b}_{1}$ and $\tilde{b}_{2}$ in $\mathrm{M}_{2n}(A)_{+}$ which are Murray-von Neumann equivalent to $b_{1}$ and $b_{2}$, respectively.  Let the equivalence be implemented by $x_{1}$ and $x_{2}$ in $\mathrm{M}_{2n}(A)$, i.e., $x_{i}^{*}x_{i} = b_{i}$ and $x_{i}x_{i}^{*} = \tilde{b}_{i}$ for $i=1,2$. Then one has $b_{1}+b_{2} = (x_{1}+x_{2})^{*}(x_{1}+x_{2})$ and $(x_{1}+x_{2})(x_{1}+x_{2})^{*} \in \mathrm{M}_{2n}(A)$.
\end{proof}

\noindent
We can now prove Theorem \ref{hereditary}.

\begin{proof} Let $b$ and $a$ be as in the Theorem.
 Let us cover the interval $(0,\|b\|]$ by nonempty open intervals
$(I_i)_{i=1}^\infty$ such that  their centres form a strictly decreasing sequence (converging to $0$), and such that   $\overline{I_i}\cap \overline{I_{i+2}}=\emptyset$ for all $i=1,2,\dots$. Let $f,\tilde{f}\in \mathrm{C}_0((0,\|b\|])_{+}$
be positive functions supported on  $\bigcup_{i=1}^\infty I_{2i-1}$ and $\bigcup_{i=1}^\infty I_{2i}$
respectively, and such that $f(t)+\tilde{f}(t)=t$ for all $t\in (0,\|b\|]$. We then have that $f(b)+\tilde{f}(b)=b$. By the previous lemma,
it suffices to show that $f(b)$ and $\tilde{f}(b)$ are Murray-von Neumann equivalent to elements in  $\mathrm{M}_{n+k}(A)_+$.
We will prove this for $f(b)$;  the proof for $\tilde{f}(b)$ is similar.

Let us prove the existence of $z\in A\otimes \mathcal K$ such that $f(b)=z^*z$ and $zz^*\in \mathrm{M}_{n+k}(A)_+$.
Let us set $f_i := f \chii_{I_{2i-1}}$. Let $(I_{2i-1}')_{i=1}^\infty$
be a sequence of pairwise disjoint open intervals such that $\overline{I_{2i-1}}\subseteq I_{2i-1}'$ for all $i$.
For each $i$ we can find additional functions $g_i,h_i \in \mathrm{C}_0((0,\|b\|])_+$ of norm at most $1$ such that:
\begin{enumerate}
\item[(i)] $g_if_i = f_i$ for all $i$;
\item[(ii)] $h_ig_i = g_i$ for all $i$;
\item[(iii)] $\mathrm{supp}(g_i) \subset I_{2i-1}'$;
\item[(iv)] $\mathrm{supp}(h_i) = \bigcup_{j\geq i} I_{2j-1}'$.
\end{enumerate}
In particular, (iii) and (iv) imply that $h_ig_j = 0$ for $j < i$.

We shall find elements $z_i$ of $A\otimes \mathcal K$ for which $z_i^*z_i = f_i(b)$ and $z_iz_i^* \in \mathrm{M}_{n+k}(A)$.
Moreover, and crucially, we shall arrange that $z_i^*z_j = 0$ for $i \neq j$.
This will allow us to define $z$ by the convergent sum
\[
z = \sum_{i=1}^\infty z_i,
\]
and we see that $z^*z = f(b)$ while $zz^* \in \mathrm{M}_{n+k}(A)$.

From $\lambda(\langle b \rangle) \leq n\lambda(\langle a \rangle$) for all $\lambda$ and since  $r_{A,a}+\epsilon \leq k$ for some $\epsilon>0$, we get that
\begin{equation}
\lambda(\langle b \rangle) + (r_{A,a}+\epsilon)\lambda(\langle a \rangle) \leq (n+k)\lambda(\langle a \rangle)=\lambda(\langle a\otimes 1_{n+k} \rangle), \label{dtau-ineq}
\end{equation}
for all $\lambda\in F(Cu(A))$. In particular, $\langle h_1(b) \rangle \leq  \langle b \rangle \leq \langle a\otimes 1_{n+k} \rangle$ and so (by \cite[Proposition 2.4,
(i) $\Rightarrow$ (iv)]{Ror-uhf2}), there exists $s_1 \in A\otimes \mathcal K$ and $y_1 \in C^*(a\otimes 1_{n+k})_+$ such that
\[
s_1^*s_1 = g_1(b) \quad \text{and} \quad y_1(s_1s_1^*) = s_1s_1^*.
\]
Set $z_1 := s_1f_1(b)^{1/2}$, then $z_{1}^{*}z_{1} = f_{1}(b)$, and $z_{1}z_{1}^{*} \in \mathrm{M}_{n+k}(A)$. By a careful choice of $y_1$, there is no difficulty in supposing that it is strictly positive in
$C^*(a\otimes 1_{n+k})$.

The remaining $z_i$'s are found recursively, by finding for each $i$ elements $s_i \in A\otimes \mathcal K$ and $y_i \in \mathrm{M}_{n+k}(A)_+$
satisfying
\[
s_i^*s_i = g_i(b), \quad y_is_i = s_i, \quad y_iz_j = 0 \text{ for $j < i$},
\]
and
\[
\lambda(\langle h_i(b) \rangle) + (r_{A,a} +\epsilon) \lambda(\langle a \rangle) \leq \lambda(\langle y_i \rangle) \quad \text{for all $\lambda \in F(Cu(A))$},
\]
where $z_j = s_jf_j(b)^{1/2}$.

Having found $s_1, \dots, s_i$ and $y_1, \dots, y_i$, let us see how to obtain $s_{i+1}$ and $y_{i+1}$.
We will obtain $y_{i+1}$ by functional calculus on $y_i - s_is_i^*$.
We already know that $(y_i - s_is_i^*)z_j = 0$ for $j < i$.
But also,
\[
(y_i - s_is_i^*)z_i = (y_i - s_is_i^*)s_if_i(x)^{1/2} = s_if_i(b)^{1/2} - s_ig_i(b)f_i(b)^{1/2} = 0.
\]

For $\lambda\in F(Cu(A))$ we have
\begin{align*}
\lambda(\langle h_{i+1}(b) \rangle )+(r_{A,a}+\epsilon)\lambda(\langle a \rangle )+\lambda(\langle g_i(b) \rangle ) &\leq \lambda(\langle h_i(b) \rangle )+(r_{A,a}+\epsilon)\lambda(\langle a \rangle )\\
&\leq \lambda(\langle y_i \rangle ) \\
& \leq \lambda(\langle y_i-s_is_i^* \rangle )+\lambda(\langle g_i(b) \rangle ).
\end{align*}
If $\lambda(\langle g_i(b) \rangle )<\infty$ we conclude from here that
\begin{align}\label{hcomp}
\lambda(\langle h_{i+1}(b) \rangle )+(r_{A,a}+\epsilon)\lambda(\langle a \rangle )\leq \lambda(\langle y_i-s_is_i^* \rangle ).
\end{align}
Let us see that (\ref{hcomp}) also holds if $\lambda(\langle g_i(b) \rangle )=\infty$. Since $g_i(b)$
is an element of the Pedersen ideal of $A\otimes \mathcal K$, it suffices to show
that $y_i-s_is_i^*$ is  a full element of $A\otimes \mathcal K$, as this will imply $\lambda(\langle y_{i} - s_{i}s_{i}^{*} \rangle)$. Let $I$ be the closed two-sided
ideal generated by  $y_i-s_is_i^*$. Since $y_i$ is full and $y_is_i=s_i$, $y_i$ maps to a full projection in
$(A\otimes \mathcal K)/I$. Such a projection must be  finite. Hence, there is
$\tilde\lambda\in F(Cu(A \otimes \mathcal{K}/I))$ that is non-zero and densely finite. The functional $\tilde\lambda$ induces
a nonzero densely finite functional $\lambda\in F(Cu(A))$ that vanishes on $\langle y_i-s_is_i^* \rangle $. This
contradicts \eqref{hcomp}. Thus, $y_i-s_is_i^*$ is full.

We conclude from \eqref{hcomp} that $\langle h_{i+1}(b) \rangle  \leq \langle y_i - s_is_i^* \rangle $, and so (by \cite[Proposition 2.4, (i) $\Rightarrow$ (iv)]{Ror-uhf2}),
there exists $s_{i+1} \in A\otimes \mathcal K$ and $y_{i+1} \in C^*(y_i - s_is_i^*)$ such that
\[ s_{i+1}^*s_{i+1} = g_{i+1}(b) \quad \text{and} \quad y_{i+1}(s_{i+1}s_{i+1}^*) = s_{i+1}s_{i+1}^*. \]
We can arrange that $y_{i+1}$ be strictly positive in $C^*(y_i - s_is_i^*)$, so that all of the inductive hypotheses hold.
\end{proof}

Although at present there is no known example, it seems likely that the following question has a positive answer.

\begin{qus} Is there a C$^*$-algebra for which $W(A)$ is not a hereditary subset of $Cu(A)$?\end{qus}

\bibliography{radius}
\bibliographystyle{abbrv}

\end{document}